\newcommand{\comments}[1]{}
\newtheorem{theorem}{Theorem}
\newcommand{\blue}[1]{{\color{black}#1}} 
\definecolor{purple}{RGB}{153,50,204}
\newcommand{\fac}{I}
\newcommand{\cust}{J}
\newcommand{\MGCLP}{MGCLP}
\newcommand{\JCF}{JCF}
\newcommand{\first}{(F1)}
\newcommand{\second}{(F2)}
\newcommand{\third}{(F3)}
\newcommand{\fourth}{(F4)}
\begin{document}

\title{An exact solution framework for the multiple gradual cover location problem}

\author[1]{Eduardo \'Alvarez-Miranda \thanks{ealvarez@utalca.cl}}
\author[2]{Markus Sinnl\thanks{markus.sinnl@univie.ac.at}}

\affil[1]{Department of Industrial Engineering, Universidad de Talca, Curic\'o, Chile}

\affil[2]{Department of Statistics and Operations Research, Faculty of Business, Economics and Statistics, University of Vienna, Vienna, Austria}

\date{}
\maketitle

\begin{abstract}
Facility and covering location models are key elements in many decision aid tools in
logistics, supply chain design, telecommunications, public infrastructure planning, and many other industrial and public sectors.
In many applications, it is likely that customers are not dichotomously
covered by facilities, but gradually covered according to, e.g., the distance to the open facilities. Moreover, customers are not served by a single facility,
but by a collection of them, which jointly serve them. In this paper we study the recently introduced
\emph{multiple gradual cover location problem} (\MGCLP).
The \MGCLP\ addresses both of the issues described above.

We provide four different mixed-integer programming formulations for the \MGCLP,
all of them exploiting the submodularity of the objective function and developed a branch-and-cut framework based one these formulations.
The framework is further enhanced by starting and primal heuristics and initialization procedures.

The computational results show that our approach
allows to effectively address different sets of instances. We provide optimal solution values for 13 instances from literature, where the optimal solution was not known, and additionally provide improved solution values for seven instances. Many of these instances can be solved within a minute. We also analyze the dependence of the solution-structure on instance-characteristics.
\end{abstract}

\section{Introduction and motivation}

Facility and covering location models are key elements in many decision aid tools in
logistics, supply chain design, telecommunications, public infrastructure planning, and many other industrial and public sectors.
Classical models typically aim at constructing location decisions that ensure that
all (or as many as possible) customers are covered by as few facilities as possible.
The reader is referred to~\citep[][]{Daskin2013,LaporteEtal2015} for two fundamental textbooks of location theory and science.

As explained in~\cite{berman2018multiple,drezner2014maximin}, the use of a dichotomic scheme for
for modeling coverage (i.e., a given facility either covers a given client or not),
as well as the common consideration that clients are served by a single facility,  
are two aspects that not necessarily respond well to real-world needs. 
Instead, in practical contexts, a facility might cover only a \emph{portion} of the demand of a customer,
and multiple facilities would \emph{jointly} or \emph{cooperatively} serve a customer simultaneously (for instance, each of them would cover a portion of its demand).

The first issue, partial or gradual coverage, has been studied already \blue{since} the 80's (see,~e.g.,~\cite[][]{church1983generalized}),
and the most common way for modeling partial covering is by a distance-based approach.
In these approaches, two critical distances, say $r$ and $R$ ($r \leq R$), are defined.
If the distance between a facility and an allocated customer, say $d$, does not exceed $r$,
then the customer is \emph{fully} covered.
On the contrary, if such distance is greater than $R$,
then the customer is \emph{not} covered at all.
For intermediate distances ($r< d < R$),
the coverage diminishes \emph{gradually} according to some $d$-dependent
function.
For example,~\cite[][]{Thompson1982} proposes, for a super-drugstore chain,
coverage functions  depending on the distance from the facility and city size.
Likewise, in~\cite[][]{JonesSimmons1993} a list of rules
for location of a retail facility are provided, which \blue{is} based
on the use of $r$ and $R$ distances; the authors explain that
such rules appear to be very common in the industry (see~\cite[][]{berman2002generalized}, for further details). 
In this retail industry examples, as well as in any other context,
the coverage level is typically viewed as a
decreasing function of the distance to the facility.
As a matter of fact, there are several alternatives for this function;
relevant examples,
covering the efforts devoted to this topic over the last 25 years, can be found, e.g., in~\cite[][]{BERMAN199841,berman2002generalized,berman2003gradual,berman2009variable,drezner2004gradual,drezner2014maximin,eiselt2009gradual,GhoshEtAl1995} and~\cite[][]{karasakal2004maximal}.

The second issue, \emph{multiple (joint)} coverage, has also been addressed before in the literature.
One of the first works consolidating the concepts on joint and partial coverage,
and its implication in facilities deployment
is presented in~\cite[][]{berman2009cooperative}.
In that paper, the authors explain that multiple location models
have been proposed before in the literature,
and give special emphasis on applications related to the location of siren stations~\cite[][]{CurrentOKelly1992,WeiEtAl2006},
and cell phone towers~\cite[][]{AKELLA2005301}.
It is shown that an adequate model of cooperation ensures
that the coverage of a client is not necessarily performed only by
the closest facility, but rather by a mixed of them based on their proximity.
The authors study the connection \blue{between} joint coverage
and \emph{backup} facilities (see,~e.g.,~\cite[][]{HoganReVelle1986}, for an early reference on this issue).
Based on this connection, one could also link
joint coverage with \emph{reliable} covering models,
which have been proposed when dealing with uncertainty in
the set of available facilities, the set of customers (or, eventually, their demand),
or both. 
A relevant example can be found in~\cite[][]{SnyderDaskin2005},
where the authors proposed a model in which every customer is allocated
to a \emph{main} facility and to a set of \emph{back-up} facilities;
these back-up facilities shall cover the demand of the customer
in case of failure of the main facility (which is likely to be the closest one).
Extensions and variants, following equivalent concepts,
can be found in~\cite[][]{AlbaredaSambola2011,CuiEtAl2010,LiEtAl2013,ShenEtAl2011}.

Although the concepts of gradual coverage and multiple (joint) facility location have been studied already for decades, 
it was only recently that they were \blue{combined} into a single modeling framework aiming at maximizing the total joint cover of all \blue{customers};
this corresponds to the \emph{multiple gradual cover location problem} (\MGCLP), which has been proposed by~\cite[][]{berman2018multiple}.
\blue{The combination is} achieved by the optimization of the \emph{joint coverage function}
introduced in~\citep{drezner2014maximin}, whose details are provided in the following section.
In~\cite[][]{berman2018multiple}, the authors motivate the \MGCLP\ and propose
different (heuristic) algorithmic strategies for solving it (see Section\ref{subsec:jcf} for further details).


\paragraph{Contribution and Outline}

In this paper, we present
four exact mixed-integer programming (MIP) approaches for the \MGCLP. 
These approaches are based on two key elements: 
(i) they exploit the submodularity of the objective function, and
(ii) the use of an exponential number of constraints, which \blue{can be separated efficiently} and are used in a branch-and-cut framework.
We also introduce preprocessing based on domination between facilities and our solution framework also contains starting and primal heuristics. 
Our approaches allow the optimal solution of 13 instances from literature, which have not been solved to optimality before. 
Many of these instances can be solved within a few seconds.

The paper is organized as follows. 
In Section~\ref{sec:form}, we present the formal definition of the
joint coverage function and the \MGCLP, 
and present the four MIP formulations. 
In Section~\ref{sec:bc}, we describe the implementation details of the branch-and-cut algorithms used to solve the corresponding MIP instances;
cut separation, starting and primal heuristics, and preprocessing.
Furthermore, we also present a series of structural properties
of optimal solutions, which enable an effective initialization.
Computational results are given in Section~\ref{sec:compres}.
Finally, concluding remarks are outlined in Section~\ref{sec:con}.

\section{Mixed-Integer Programming formulations for the MGCLP}
\label{sec:form}

\subsection{The joint coverage function}
\label{subsec:jcf}
The joint coverage function, presented in~\citep{drezner2014maximin}, 
and later used in~\cite[][]{berman2018multiple},
is based on probabilistic arguments.
%
Let $\fac$ denote the set of potential facility location and $\cust$ the set of customers. 
Let $0\leq f_{ij}\leq 1$ be the coverage that customer $j \in \cust$ receives from
facility $i \in \fac$ (with $f_{ij}=1$ meaning that the customer gets completely covered, and $f_{ij}=0$ meaning that the customer does not get covered at all by the facility). 
Additionally, let $0\leq \theta \leq 1$ be a given weighting parameter. 
The \emph{joint coverage function (\JCF)} for a customer $j \in \cust $ and a given set of facilities $\fac'$ reads as
\begin{equation}
p_j(\theta, \fac')=\theta \Big( \max_{i \in \fac'} f_{ij}\Big)+(1-\theta) \Big( 1-\prod_{i \in \fac'}(1-f_{ij})\Big) \label{eq:cov} \tag{JCF}
\end{equation}
In this function $f_{ij}$ gets interpreted as the \emph{probability of full coverage}. 
The function combines two extreme cases, namely that coverage events are correlated with a correlation coefficient of one, and that coverage events are independent;
this combination is \emph{controlled} by the parameter $\theta$. 
Note that in contrast to many other location problem, with this function, co-location (i.e., opening more than one facility at a potential location) may be beneficial.

Let $w_j\geq0$ be weights for each customer $j \in \cust$,
and let $K$ be a given (integer) number of facilities to be opened. 
Since, theoretically, we could open the $K$ facilities in a single location,
we define $\fac^K$ as the set of potential facility locations,
having $K$ copies of each facility in $\fac$ (to account for potential co-location).
The \emph{multiple gradual cover location problem (\MGCLP)} is defined as follows \citep{berman2018multiple}. 

\begin{align}
W^*(\theta) = \max_{\fac'\subseteq \fac^K, |\fac'|\leq K} \quad W(\theta,\fac')=\sum_{j \in \cust} w_j p_j(\theta, \fac'), \tag{\MGCLP}
\end{align}
i.e., in the \MGCLP\ we want to open $K$ facilities (potentially more than one at a given location) as to maximize the weighted sum of the \JCF\ over all customer.
As stated above, the \MGCLP\ was introduced in~\cite{berman2018multiple}, 
where a greedy algorithm, an ascent and tabu search heuristic and an approximative MIP-based approach were presented.
\citet{berman2018multiple} show that the greedy algorithm has an approximation guarantee of $1-1/e$ by proving that the objective function $W(\theta,\fac')$ is \emph{nondecreasing} and \emph{submodular} and combining it with the result of
~\cite{nemhauser1978analysis} (see Section~\ref{sec:heur} for details).
The MIP-approach uses the tangent-line approximation (TLA) method for twice-differentiable concave objective functions,
introduced in~\cite{aboolian2007competitive}. 
In this approach, the objective function gets approximated by $L$ line segments. 
In their computational study the authors choose three different $L$ such that the TLA objective is within one, five, and ten percent of optimality, respectively. 

\subsection{MIP models for the MGCLP}
\label{subsec:mipmodels}

In this section, we present four different MIP formulations,
$\first$-$\fourth$, for the \MGCLP, and exploit that the objective function $W(\theta,\fac')$ with $\fac' \in \fac^K$ is a nondecreasing and submodular function. 
Let $\Phi(S)$ be a real valued set-function over the subsets $S$ of a ground-set $N$. Let $\rho_n(S)=\Phi(S \cup \{n\})-\Phi(S)$ for all $S \subset N$ and $n \in N$, i.e., the marginal gain achieved by adding element $n$ to the set $S$. Such a function $\Phi(S)$ is a nondecreasing and submodular function, iff
$\Phi(T)\leq\Phi(S) + \sum_{n \in T \setminus S}\rho_n(S), \forall S,T \subseteq N$
(see, e.g., ~\cite{nemhauser1978analysis}, which also presents additional equivalent definitions)

The four formulations built-on each other and are based on~\cite{nemhauser1981maximizing}, 
where it is shown that for any nondecreasing submodular function maximization problems can be formulated as MIPs by introducing an additional (continuous) variable $\eta$.
This variable is used to measure the value of the objective function. 
The correctness of the objective value is ensured by an exponential family
of cuts on $\eta$, as encoded by expression~\eqref{eq:scuts}. 
Let $\mathbf{z}\in\{0,1\}^{|N|}$ denote the characteristic vector of the set $N$. 
The family of cuts is given by
\begin{equation}
\eta \leq \Phi(S) + \sum_{i \in N \setminus S}\rho_i(S)z_i, \quad \forall S \subseteq N \label{eq:scuts} \tag{SCuts}
\end{equation}

Note that while all of these cuts are valid, for correctness for problems with a cardinality constraint of value $K$ on the size of $S$ (like the \MGCLP), it is enough to add all cuts for $|S|=K$. As shown in~\citep{berman2018multiple}, the objective function $W(\theta,\fac')$ of the \MGCLP\ is nondecreasing and submodular.
In particular, it is the (weighted) sum of the nondecreasing and submodular functions $p_j(\theta,\fac')$, 
which in turn are a convex combination of two nondecreasing and submodular functions,
namely $\max_{i \in \fac'} f_{ij}$ and $1-\prod_{i \in \fac'}(1-f_{ij})$.

Let $\mathbf{x}\in\{0,1\}^{|\fac|\times K}$ be a binary vector such that $x_i^k=1$, if the $k$-th facility ($1\leq k \leq K$) at location $i \in \fac$ is opened (recall that co-location is possible);
and  $x_i^k=0$, otherwise.
When needed, we write $i^k$ to refer to the $k$-th facility at location $i$ and identify an element of $I^K$ by $(i,k)$.
The following constraint~\eqref{eq:card} ensures, that at most $K$ facilities are opened.
\begin{equation}
\sum_{i \in I, 1 \leq k \leq K} x_i^k \leq K. \label{eq:card} \tag{CARD}
\end{equation}

Additionally, in order to exclude symmetric solutions with regard to co-location, 
the following constraints can be used
\begin{equation}
x_i^k \leq x_i^{k+1},\quad \forall i \in \fac, 1\leq k \leq K-1; \label{eq:sym} \tag{SYM}
\end{equation}
these constraint ensure that the $(k+1)$-th facility in a location is only opened, 
if the $k$ facilities with lower $k$ have been opened before. 
These constraints are not necessary for correctness of the formulations, 
but proved very helpful in preliminary computations and thus are used in all formulations.

For a given set of facilities $\fac' \subset \fac^K$ and a given vector $\mathbf{x}$,
let $\phi(\mathbf{x},\fac')$ denote the right-hand-side of cuts~\eqref{eq:scuts} 
when applied to the objective function $W(\theta,\fac')$ of the \MGCLP. 
Note that the coefficients $\rho_i$ for $(i,k) \in \fac^K \setminus \fac'$ of a cut can be easily calculated by calculating $p_j(\theta,\fac' \cup \{i\})-p_j(\theta,\fac')$ for each customer $j \in \cust$ and summing up, i.e., we obtain
\begin{equation}
\phi(\mathbf{x},\fac')=W(\theta,\fac')+\sum_{(i,k) \in \fac^K \setminus \fac} \Big(\sum_{j \in \cust}(p_j(\theta,\fac' \cup \{i\})-p_j(\theta,\fac'))\Big) x^k_i.
\end{equation}

Using this notation,~\eqref{eq:obj1}-\eqref{eq:vardef1} gives a first formulation $\first$;
\begin{align} 
\first \quad W^*(\theta) &= \max\quad \eta  \label{eq:obj1} \tag{F1.1} \\
\eta &\leq \phi (\mathbf{x}, \fac'),\; \forall \fac' \in \fac^K: |\fac'|=K \label{eq:scuts1} \tag{F1.2} \\ 
&\eqref{eq:card},\;\eqref{eq:sym}\;\mbox{and}\; \mathbf{x}\in\{0,1\}^{|\fac|\times K}.\label{eq:vardef1} \tag{F1.3}
\end{align} 
The objective function~\eqref{eq:obj1} and constraints~\eqref{eq:scuts1} ensure that the objective is correct. 
As there is an exponential number of constraints~\eqref{eq:scuts1},
our strategy for tackling this formulation relies on
adding them on-the-fly when they are violated, within a branch-and-cut scheme
(the separation of the constraints is discussed in Section~\ref{sec:sep}). 

The second formulation, $\second$, encoded by constraints~\eqref{eq:obj2}-\eqref{eq:vardef2},
is given by
\begin{align}
\second  \quad W^*(\theta) &=  \max\quad \sum_{j \in \cust} \eta_j \label{eq:obj2} \tag{F2.1} \\
\eta_j &\leq \phi_j (\mathbf{x}, \fac'),\; \forall j \in \cust, \forall \fac' \in \fac^K: |\fac'|=K  \label{eq:scuts2} \tag{F2.2} \\ 
&\eqref{eq:card},\;\eqref{eq:sym},\;\mbox{and}\;
  \mathbf{x}\in\{0,1\}^{|\fac|\times K} \label{eq:vardef2}. \tag{F2.3}
\end{align}
In this second formulation, we exploit the fact that the objective function of the \MGCLP\ \emph{decomposes} by customer 
(as it it the sum of the functions $p_j(\theta,\fac')$ for each customer). 
Thus, instead of a single-variable $\eta$ to measure the objective, 
we use continuous variables $\eta_j$ for each $j \in \cust$, 
and have the sum of these variables in the objective function. 
Hence, each variable $\eta_j$ now has an individual family of cuts $\phi_j (\mathbf{x}, \fac')$ of type~\eqref{eq:scuts}, 
which ensures that the part in the objective contributed by the coverage of customer $j$ is correct.

In the third formulation $\third$, 
we further exploit the decomposability of the objective function $W(\theta,I')$. 
We model the $\max_{i \in \fac'} f_{ij}$ component of each $p_j(\theta,\fac')$ by using additional variables
and only use cuts to deal with the $1-\prod_{i \in \fac'}(1-f_{ij})$ component.
Note that in the MIP model of~\cite{berman2018multiple},
the $\max$-part was formulated in a similar way, while the second part was approximated using the TLA approach. 
Let $\mathbf{y}\in\{0,1\}^{|\fac|\times |\cust|}$ be a vector of binary
variables such that $y_{ij}=1$ if the maximum of $f_{ij}$ for a $j \in \cust$ is obtained by opening facility $i$, and $y_{ij}=0$, otherwise.
Let $\phi^P_j (\mathbf{x}, \fac')$ denote the cuts of type~\eqref{eq:scuts} for $j \in \cust$ associated with the $1-\prod_{i \in \fac'}(1-f_{ij})$-part of the objective. 
We obtain the following formulation $\third$;
\begin{align}
\third  \quad W^*(\theta) &= \max\quad \sum_{i \in \fac} \sum_{j \in \cust} \theta w_j f_{ij} y_{ij} + \sum_{j \in \cust} \eta_j \label{eq:obj3}\tag{F3.1} \\
\eta_j &\leq \phi^P_j (\mathbf{x}, \fac'),\; \forall j \in \cust, \forall \fac' \in \fac^K: |\fac'|=K  \label{eq:scuts3}  \tag{F3.2}\\ 
\sum_{i \in \fac} y_{ij}&\leq 1 ,\; \quad \forall j \in \cust \label{eq:max31}\tag{F3.3} \\
y_{ij}&\leq x_i^1,\; \quad \forall i \in \fac, \forall j \in \cust \label{eq:max32} \tag{F3.4}\\
\eqref{eq:sym},&\;\eqref{eq:card},\;\mathbf{x}\in\{0,1\}^{|N|\times K}\;\mbox{and}\;\mathbf{y}\in\{0,1\}^{|\fac|\times |\cust|} \label{eq:vardef3x} \tag{F3.5}
\end{align}
In this MIP model, cuts~\eqref{eq:scuts3} ensure that the product-part of the objective function is correctly measured. 
Constraints~\eqref{eq:max31} make sure that only one facility can contribute to the max-part of any customer. 
Moreover, constraints~\eqref{eq:max32} model the fact that, if a facility at some location wants to contribute to the max-part, it must be opened;
in particular, the first facility (of the $K$ copies) at this location must be opened. 
Hence, these constraints complement the ordering imposed by the symmetry constraints~\eqref{eq:sym}.

Finally, in formulation $\fourth$, we again exploit the decomposability of the objective function.
Compared to $\third$, we also model the max-part of the objective by using cuts, 
i.e., both parts of the objective are now modeled using cuts. 
To this end, for each customer $j \in \cust$, we introduce two continuous variables,
$\eta^M_j$ and $\eta^P_j$,
to measure the contribution of the max-part and product-part, respectively. 
The cuts for the max-part actually have a nice form and are of polynomial size (see~\cite{nemhauser1981maximizing} for further details):
Assume that for a given customer $j$, 
the $f_{ij}$ values are ordered in a nondecreasing order, i.e.,
$f_{|\fac|j}\geq f_{(|\fac|-1)j} \geq \ldots \geq f_{1j} \geq f_{0j}$, 
with $f_{|\fac|-0j}$ defined to be zero. 
Let $(\cdot)^+=\max\{0,\cdot\}$. 
Then these cuts are of the form
\begin{equation*}
 \eta^M_j \leq \theta w_j \Big(f_{rj} +\sum_{i \in \fac} (f_{ij}-f_{rj})^+ x^1_i \Big), \quad r=0, \ldots, |\fac|-1.
 \end{equation*}
Note that only the first facilities for each location are involved in the cuts. 
Although they are of polynomial size, there can still be many of these cuts;
hence, their separation is also embedded within a branch-and-cut fashion. Let $\phi^M_j (\mathbf{x}, r)$ denote these cuts for $r=0, \ldots, |\fac|-1$.
Using these cuts, we obtain the formulation \fourth;
\begin{align}
\fourth \quad W^*(\theta) &=\max\quad \sum_{j \in \cust} (\eta^M_j + \eta^P_j)  \label{eq:obj4}\tag{F4.1} \\
\eta^P_j &\leq \phi^P_j (\mathbf{x}, \fac'),\; \forall j \in \cust, \forall \fac' \in \fac^K: |\fac'|=K  \label{eq:scuts4a}\tag{F4.2} \\ 
\eta^M_j &\leq \phi^M_j (\mathbf{x}, r),\; \forall j \in \cust, r=0, \ldots, |\fac|-1 \label{eq:scuts4b} \tag{F4.3}\\ 
&\eqref{eq:sym},\;\eqref{eq:card}\;\mbox{and}\;\mathbf{x}\in\{0,1\}^{|N|\times K}. \label{eq:vardef4}\tag{F4.4}
\end{align}


\section{Implementation details of the branch-and-cut algorithms \label{sec:bc}}

All the four formulation have an exponential number of constraints
(\eqref{eq:scuts1},~\eqref{eq:scuts2},~\eqref{eq:scuts3} and~\eqref{eq:scuts4a}),
for ensuring the correctness of the objective function.
These cuts (and also the polynomial-sized family~\eqref{eq:scuts4b}) are separated on-the-fly using branch-and-cut approaches. 
In this section, separation of the cuts is described, 
as well as further ingredients of the branch-and-cut approaches.

\subsection{Separation of cuts \label{sec:sep}}

The separation of cuts~\eqref{eq:scuts1}, is performed as follows.
Let $(\tilde{\mathbf{x}}, \tilde \eta)$ be the values of the LP-relaxation at a
given branch-and-bound node. 
If $\tilde{\mathbf{x}}$ is binary, then an exact separation of the cuts can be done by calculating $W(\theta,\bar \fac)$, where $\bar \fac=\{i^k:\tilde x^k_i=1\}$,
i.e., the open facilities induced by $\tilde{\mathbf{x}}$. 
If $W(\theta,\bar \fac)>\tilde \eta$, the current LP-solution violates~\eqref{eq:scuts1}, and we add the cut induced by $\bar \fac$. 

Note that the exact separation of cuts for the case of binary $\tilde{\mathbf{x}}$ is enough to ensure correctness of our approach. 
Nonetheless, we also implemented a heuristic separation for the case of $\tilde{\mathbf{x}}$ being fractional. 
In this case, we sort the facilities $i^k$ non-increasingly according to the values of $\tilde x^k_i$ and construct $\bar \fac$ by taking the $K$ first facilities of the sorting. 
This solution is then used to induce the corresponding cut
(which is added, if violated).
 
The separation of the other cuts,~\eqref{eq:scuts2},~\eqref{eq:scuts3},~\eqref{eq:scuts4a} and~\eqref{eq:scuts4b},
is performed in an equivalent manner.

\subsection{Starting heuristic and primal heuristic \label{sec:heur}}

To initialize the branch-and-cut framework, we add a feasible starting solution. 
This starting solution is constructed using the greedy $(1-1/e)$ approximation algorithm of~\cite[][]{nemhauser1978analysis}, 
which is also used in~\cite[][]{berman2018multiple} as one of the tested approaches.
We also try to improve the solution constructed by the greedy algorithm with a local search. 
In the local search phase, for each opened facility in the solution,
we try to replace it with another one, if it gives an improved solution value. 
We pass through all opened facilities 
(starting with the one added last by the greedy algorithm) 
and repeat this procedure, until no improvement is found in a round of iterations. 
The starting heuristic is outlined in Algorithm~\ref{alg:greedy}.

\begin{algorithm}[h!tb]   
\DontPrintSemicolon                 
\SetKwInOut{Input}{input}\SetKwInOut{Output}{output}
\Input{instance $(\fac,\cust, f, \theta, K)$ of the \MGCLP }
\Output{feasible solution $S$}
$S\leftarrow \emptyset$\;
\For{$k=1$ \KwTo $K$ }{
$i^*=\arg \max_{i \in \fac} W(\theta, S \cup \{i\})$ \label{alg:argmax}\;
$S\leftarrow S \cup \{i^*\}$\;
}
\texttt{improve} $\leftarrow true$\;
\While{improve}
{
\texttt{improve} $\leftarrow false$\;
\For{$k=K$ \KwTo $1$ }{
$S'\leftarrow S \setminus S[k]$\;
\For{$i \in \fac$}{
\If{$W(\theta,S' \cup \{i\})>W(\theta,S)$}
{
$S\leftarrow S' \cup \{i\}$\;
\texttt{improve} $\leftarrow true$\;
break\;
}
}
}
}
\caption{Greedy Heuristic of~\cite[][]{nemhauser1978analysis} applied to the \MGCLP, complemented by a local search phase. \label{alg:greedy} }
\end{algorithm}

To speed-up the evaluation of $\arg \max_{i \in \fac} W(\theta, S \cup \{i\})$ in line~\ref{alg:argmax}, 
we use \emph{lazy evaluation} (see~\cite{leskovec2007cost}), 
which exploits the submodularity of $W$, according to the following rule.
In the first iteration, we calculate $W(\theta,\{i\})$ for each $i \in \fac$. 
We add $i^*$ to $S$ (due to the $\arg \max$ criterion),
and also store all the facilities in a priority queue, sorted by decreasing values of $W(\theta,\{i\})$. 
In the remaining iterations, instead of calculating $W(\theta,S \cup \{i\})$ 
for each $i \in \fac$, we start by calculating $\rho'=W(\theta,S \cup \{i'\})$,
for the top-element $i'$ in the priority queue. 
We then compare the value $\rho'$ with the stored value $\rho''$ 
(of the second element $i''$ in the priority queue). 
If $\rho' \geq \rho''$, we have that $i'$ gives the $\arg \max$, 
since due to submodularity, the values of $W(\theta,S \cup \{i\})$ are non-increasing when the size of $S$ increases. 
If $\rho'<\rho''$, we re-insert $i'$ in the priority queue with value $\rho'$,
and repeat the procedure for $i''$.

During the branch-and-cut, 
we use a modified version of Algorithm~\ref{alg:greedy} as primal heuristic. 
In this modified version, in line~\ref{alg:argmax}, 
we use $\arg \max_{i \in \fac} \tilde x^k_i W(\theta, S \cup \{i^k\})$,
with $\tilde{\mathbf{x}}$ being a (eventually integer) solution
at a given node of the branch-and-bound tree.
In order to save computation time on unnecessary runs of the local search, 
we store all solution values of (intermediate) solutions found during previous runs of the heuristic in a hash-map, and stop the run of the local search, 
if the currently constructed solution has the same value as a previously encountered solution.

\subsection{Preprocessing and initialization}

Due to co-location, the number of variables in the models can be very large, 
as there could be instances where, e.g., 
the optimal solution only uses one location, and all $K$ facilities are opened there. In this section, we give results that allow removing of some of the potential co-locations by showing that they will never be used in an optimal solution (in our framework, we set all variables associated which such co-locations to zero).

%

For the results presented below, 
we assume that the ordering constraint~\eqref{eq:sym} is part of the corresponding MIP model; hence, given a location $i$ and positions $l,k$ with $k<l$ 
a facility $i^k$ will always be opened \emph{before} a facility with higher superscript $i^l$.
The first result is given by the following theorem.
\begin{theorem}
Let $i \in \fac$ be a facility location with each $f_{ij}=1$ or $f_{ij}=0$, $j \in \cust$. Then in an optimal solution, at most one facility will be opened at location $i$.
\end{theorem}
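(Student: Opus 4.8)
The plan is to show that, once one copy of the facility at location $i$ is open, opening additional copies at $i$ contributes nothing to the objective, so any optimal solution that opens several facilities at $i$ can be turned into one that opens at most one, without changing the objective value.

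First I would fix a customer $j \in \cust$ and examine the term $p_j(\theta,\fac')$ in the two possible cases for $f_{ij}$. If $f_{ij}=1$ and at least one copy of $i$ (equivalently, under~\eqref{eq:sym}, the copy $i^1$) belongs to $\fac'$, then $\max_{l \in \fac'} f_{lj}=1$ and the product $\prod_{l\in\fac'}(1-f_{lj})$ vanishes, because the factor contributed by that copy is $1-1=0$; hence $p_j(\theta,\fac')=\theta+(1-\theta)=1$. Since $p_j$ is a convex combination of quantities that are each at most one, this is the maximum possible value of $p_j$, so opening further copies of $i$ cannot increase it. If instead $f_{ij}=0$, a copy of $i$ contributes a factor $1-f_{ij}=1$ to the product (no effect) and a value $0$ to the $\max$ (no effect), so copies at $i$ are irrelevant for customer $j$. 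In both cases, the contribution of customer $j$ depends on the copies at location $i$ only through whether at least one of them is open.

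Summing over $j\in\cust$ with the weights $w_j$, this yields $W(\theta,\fac')=W(\theta,\fac'')$ whenever $\fac''$ is obtained from $\fac'$ by deleting all copies at location $i$ except one (keeping $i^1$ if any copy at $i$ is present in $\fac'$). Then, starting from an optimal solution $\fac^*$, I would apply this reduction to obtain $\hat\fac$ with $W(\theta,\hat\fac)=W(\theta,\fac^*)=W^*(\theta)$ and $|\hat\fac|\le|\fac^*|\le K$; since~\eqref{eq:sym} is respected and $\hat\fac$ contains at most one facility at $i$, $\hat\fac$ is an optimal solution of the desired form. In particular, the variables $x_i^k$ with $k\ge 2$ may be fixed to zero without changing $W^*(\theta)$, which is the statement needed for the preprocessing.

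I do not expect a genuine obstacle here: the core is a short case distinction on $f_{ij}\in\{0,1\}$. The only points requiring minor care are the bookkeeping between ``at least one copy of $i$ is open'' and ``$i^1$ is open'' — exactly what~\eqref{eq:sym} guarantees — and the observation that the cardinality constraint reads $|\fac'|\le K$ rather than $|\fac'|=K$, so that simply deleting the redundant copies keeps the solution feasible.
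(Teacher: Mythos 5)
Your proposal is correct and is essentially the paper's argument, made explicit: the paper's proof is the one-line observation that an additional copy at such a location improves neither the max-part nor the product-part, and your case distinction on $f_{ij}\in\{0,1\}$ (with $p_j$ already at its maximum value $1$ when $f_{ij}=1$ and one copy is open, and copies at $i$ being irrelevant when $f_{ij}=0$) is exactly the justification of that claim. The additional bookkeeping about \eqref{eq:sym} and the $|\fac'|\le K$ constraint is fine but not needed beyond what the paper states.
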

\begin{proof}
Opening another facility at this location will neither improve the max-part of the objective function, nor the product-part.
\end{proof}

Complementary, the next result exploits that the objective function $W(\theta,\fac')$ is a nondecreasing and submodular function. 
For a facility $i \in \fac$ an and integer $k$, 
let $\mathcal I^k=\{i^{k'}:k'\leq k\} $, i.e., 
be the set of the first $k$ facilities at this location, with $\mathcal I^0=\emptyset$.

\begin{theorem}
Let $UB^{K-k}$ denote an upper bound for the objective value for any solution with $K-k$ open facilities and $z$ be the value of a feasible solution. 
If $W(\theta,\mathcal I^k)+UB^{K-k}<z$, then no facilities will be opened in positions $k$ to $K$ at location $i$ in any optimal solution.
\end{theorem}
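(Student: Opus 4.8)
The plan is to argue by a straightforward exchange/monotonicity argument, using the fact that $W(\theta,\cdot)$ is nondecreasing together with the cardinality constraint. Suppose, for contradiction, that in some optimal solution $\fac^*$ at least one facility is opened at location $i$ in a position in $\{k,\dots,K\}$. Because the ordering constraints~\eqref{eq:sym} are assumed to be part of the model, this means that \emph{all} of the first $k$ facilities $i^1,\dots,i^k$ at location $i$ are open, i.e., $\mathcal I^k \subseteq \fac^*$. Split $\fac^* = \mathcal I^k \cup R$, where $R = \fac^* \setminus \mathcal I^k$ is the remaining set of open facilities, with $|R| = |\fac^*| - k \leq K - k$.

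The key step is then to bound $W(\theta,\fac^*)$ from above. Since $W(\theta,\cdot)$ is nondecreasing and submodular, for any sets $S,T$ we have $\Phi(T)\leq \Phi(S)+\sum_{n\in T\setminus S}\rho_n(S)$; applied with $S=\mathcal I^k$ and $T = \fac^*$, together with $\rho_n(\mathcal I^k) \le \rho_n(\emptyset)$ by submodularity and $\rho_n(\emptyset) = W(\theta,\{n\}) \ge 0$, one gets $W(\theta,\fac^*) \le W(\theta,\mathcal I^k) + W(\theta, R \cup \{n_0\})$-type estimates. More directly, the cleanest route is: $W(\theta,\fac^*) \le W(\theta,\mathcal I^k) + W(\theta, R)$ would follow from subadditivity, but to be safe I would instead write $W(\theta,\fac^*) = W(\theta,\mathcal I^k \cup R) \le W(\theta,\mathcal I^k) + \big(W(\theta,\mathcal I^k\cup R) - W(\theta,\mathcal I^k)\big)$ and bound the marginal term $W(\theta,\mathcal I^k\cup R) - W(\theta,\mathcal I^k) \le W(\theta,R) - W(\theta,\emptyset) = W(\theta,R)$ using submodularity (the marginal gain of adding the whole block $R$ is no larger when added to $\mathcal I^k$ than when added to $\emptyset$, since $W = \sum_j w_j p_j$ and each $p_j$ is submodular, so this aggregated-marginal inequality holds). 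Since $|R| \le K-k$, we have $W(\theta,R) \le UB^{K-k}$ by definition of the upper bound. Hence $W(\theta,\fac^*) \le W(\theta,\mathcal I^k) + UB^{K-k} < z$ by hypothesis. But $z$ is the value of a feasible solution, so $W^*(\theta) \ge z > W(\theta,\fac^*)$, contradicting optimality of $\fac^*$. Therefore no optimal solution opens a facility at location $i$ in any position $k,\dots,K$.

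The main obstacle is the aggregated-marginal inequality $W(\theta,\mathcal I^k\cup R) - W(\theta,\mathcal I^k) \le W(\theta,R)$, i.e., that the marginal gain of adding a whole \emph{set} $R$ to $\mathcal I^k$ is dominated by adding $R$ to $\emptyset$; this is a consequence of submodularity plus monotonicity but should be stated carefully (it follows by writing the block addition as a telescoping sum of single-element marginals and applying submodularity element by element, comparing each marginal $\rho_{r}(\mathcal I^k \cup (\text{earlier part of }R))$ against $\rho_r(\text{earlier part of }R)$). Everything else — the use of~\eqref{eq:sym} to force $\mathcal I^k \subseteq \fac^*$, the cardinality bound $|R|\le K-k$, and the definition of $UB^{K-k}$ — is bookkeeping.
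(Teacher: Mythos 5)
Your proof is correct and follows essentially the same route as the paper: both arguments decompose the hypothetical optimal solution into $\mathcal I^k$ plus a remainder of at most $K-k$ facilities and invoke the subadditivity of the nondecreasing submodular function $W(\theta,\cdot)$ (via telescoping single-element marginals) to bound the total by $W(\theta,\mathcal I^k)+UB^{K-k}<z$. The only (immaterial) difference is the direction of the split --- the paper bounds the marginal gain of adding $\mathcal I^k$ on top of the remainder by $W(\theta,\mathcal I^k)$, while you bound the marginal gain of the remainder on top of $\mathcal I^k$ by $UB^{K-k}$.
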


\begin{proof}
Since $W(\theta,\cdot)$ is a nondecreasing and submodular function we have that $\sum_{k'\leq k} \Big(\sum_{j \in \cust}(p_j(\theta,\mathcal I^k)-p_j(\theta,\mathcal I^{k-1}))\Big)=W(\theta,\mathcal I^k)$ is an upper bound on the marginal gain, 
which can be achieved by opening the first $k$ facilities at location $i$. 
Thus, taking any solution where $K-k$ facilities have been opened and then opening the first $k$ facilities at $i$, 
we can never get a solution with objective at least $z$, and hence, such a solution cannot be optimal.
\end{proof}

To use this results, an upper bound $UB^{K-k}$ on the objective value for any solution with $k'=K-k$ open facilities is needed. 
We use two different ways to calculate such a bound, and then use the smaller value.
The first way is to use the greedy $1-(1/e)$ approximation algorithm 
(see Section~\ref{sec:heur}). 
Let $z^{k'}$ be the value of the solution constructed at step $k'$, 
as the algorithm has an $1-(1/e)$ approximation guarantee, 
we have that  $UB^{k'} \leq \frac{z^{k'}}{1-(1/e)}$. 
The second way consists of calculating the marginal gain $\sum_{j \in \cust}(p_j(\theta,\mathcal I^l)-p_j(\theta,\mathcal I^{l-1}))$ 
for all $i$ and $l$ up to $k'$, sorting the resulting values in a nonincreasing way,
and then summing up the first $k'$ values.

In addition to above results, which allow the removing of co-locations, 
there is also the following dominance result, 
which allows the removal of facility locations. 

\begin{theorem}
Let $i,i' \in \fac$ be two facilities, with $f_{ij}\geq f_{i'j}$ for each $j \in \cust$. Then, in an optimal solution, no facility will be opened at location $i'$.
\end{theorem}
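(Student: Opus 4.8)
The plan is to use a standard exchange (swap) argument at the level of individual customers, in the spirit of the proof of the first theorem above, but with some extra care for the co-location bookkeeping.

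First I would take, among all optimal solutions $\fac^*\subseteq\fac^K$ (satisfying~\eqref{eq:sym}), one that opens the \emph{fewest} copies at location $i'$, and suppose for contradiction that this number $m'$ is at least one. By~\eqref{eq:sym} the copies used at $i'$ are exactly $i'^{1},\dots,i'^{m'}$, and the copies used at $i$ are $i^{1},\dots,i^{m}$ for some $m\ge 0$. Since these copies are all distinct members of $\fac^*$ and $m+m'\le|\fac^*|\le K$, we get $m\le K-1$, so the copy $i^{m+1}$ is not used. Put $\fac^{**}=\fac^*\setminus\{i'^{m'}\}$ and $\bar\fac=\fac^{**}\cup\{i^{m+1}\}$; then $\bar\fac$ is again feasible, because $|\bar\fac|=|\fac^*|\le K$ and the sets of copies used at $i$ (now $i^{1},\dots,i^{m+1}$) and at $i'$ (now $i'^{1},\dots,i'^{m'-1}$) are still downward closed, so~\eqref{eq:sym} holds.

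The core step is to show $p_j(\theta,\bar\fac)\ge p_j(\theta,\fac^*)$ for every $j\in\cust$, which I would do componentwise, using that both parts of the \JCF\ depend on a facility set only through the coverage values $f_{\cdot j}$ of its members (all copies at a location share the same $f_{\cdot j}$). For the $\max$-part, passing from $\fac^*$ to $\bar\fac$ replaces a member with coverage $f_{i'j}$ by one with coverage $f_{ij}\ge f_{i'j}$, which cannot decrease $\max_{l\in\fac'}f_{lj}$. For the product-part, write $\prod_{l\in\fac^*}(1-f_{lj})=(1-f_{i'j})\prod_{l\in\fac^{**}}(1-f_{lj})$ and $\prod_{l\in\bar\fac}(1-f_{lj})=(1-f_{ij})\prod_{l\in\fac^{**}}(1-f_{lj})$; since $0\le 1-f_{ij}\le 1-f_{i'j}$ and the common factor is nonnegative, the product for $\bar\fac$ is no larger, hence $1-\prod_{l\in\bar\fac}(1-f_{lj})\ge 1-\prod_{l\in\fac^*}(1-f_{lj})$. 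As $p_j(\theta,\cdot)$ is a convex combination of these two quantities and $w_j\ge 0$, summing over $j\in\cust$ gives $W(\theta,\bar\fac)\ge W(\theta,\fac^*)$. Thus $\bar\fac$ is also optimal but uses strictly fewer copies at $i'$ (namely $m'-1$), contradicting the choice of $\fac^*$. Therefore $m'=0$, so there is an optimal solution opening no facility at $i'$, and $i'$ can be removed in preprocessing.

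I do not expect a real obstacle; the only points needing care are (i) confirming that a free copy $i^{m+1}$ of $i$ actually exists — this is exactly where the cardinality constraint~\eqref{eq:card} is used — and (ii) verifying that the exchange preserves~\eqref{eq:sym}. The monotonicity of each \JCF-component under this specific exchange is immediate. (If one prefers the phrasing ``in an optimal solution'' used in the earlier theorems, the argument shows that every optimal solution can be transformed, at no loss in objective value, into one that avoids $i'$, which is what the preprocessing step requires.)
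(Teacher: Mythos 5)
Your proof is correct and follows essentially the same route as the paper's: an exchange argument that replaces an open facility at $i'$ with one at $i$ and observes that both the $\max$-part and the product-part of each $p_j(\theta,\cdot)$ can only improve since $f_{ij}\geq f_{i'j}$. The paper states this in two sentences; you merely add the (sound) bookkeeping about the existence of a free copy $i^{m+1}$ under~\eqref{eq:card} and the preservation of~\eqref{eq:sym}, which the paper leaves implicit.
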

\begin{proof}
Suppose there is a solution $S'$ where a facility is opened at location $i'$.
Construct another solution $S$, where the open facility at $i'$ is replaced with an open facility at location $i$. 
Since $f_{ij}\geq f_{i'j}$ for each $j \in \cust$, 
the objective value of $S$ is at least as large as the one of $S'$.
\end{proof}


Moreover, we also add all cuts induced by $I'=\emptyset$ to initialize our framework. 
The efficacy of this initialization is evaluated
in the following section,


\section{Computational results \label{sec:compres}}

The branch-and-cut framework was implemented in C++ using CPLEX 12.7, 
which was left at default settings. 
The runs were carried out on an Intel Xeon E5 v4 CPU with 2.2 GHz and 6GB memory and using a single thread. 
The timelimit for a run was set to 600 seconds.

\def\seta{\texttt{pm-$5$-$20$-$0.2$}}
\def\setb{\texttt{pm-$5$-$20$-$0.5$}}
\def\setc{\texttt{pm-$5$-$20$-$0.8$}}
\def\setd{\texttt{pm-$10$-$25$-$0.2$}}
\def\sete{\texttt{pm-$10$-$25$-$0.5$}}
\def\setf{\texttt{pm-$10$-$25$-$0.8$}}

\subsection{Instance description}
\label{subsec:instdesc}

To evaluate the effectiveness of our approach, 
we used the same instances as in~\cite{berman2018multiple}. 
They are based on 40 p-median instances from the OR-library~\cite{ORLibrary}. 
Each node of the instances is a customer and also a potential facility location, and the weights are uniform. 
The instances have up to 900 nodes and $K$ is up to 200 
(see Tables~\ref{ta:ta11}-\ref{ta:ta23} for the values for each instance). 
To define the coverage rates $f_{ij}$, $i \in \fac, j \in \cust$ a linear decline function is used to convert the distances $d_{ij}$ of an instance to values $f_{ij}$. In particular, given two threshold values $r\geq 0$ and $R>r$, the values $f_{ij}$ are defined as follows:
\begin{equation*}
f_{ij}= \begin{cases}
1 &\mbox{if } d_{ij} \leq r \\
1-\frac{d_{ij}-r}{R-r} & \mbox{if } r < d_{ij} < R \\
0 &\mbox{if } d_{ij} \geq R  \\
 \end{cases}.
\end{equation*}
In~\cite{berman2018multiple}, the authors use $r=5$ and $R=20$, along with $\theta=0.2$. 
In addition to this, we also tested on instances using $r=10$ and $R=25$ 
(thus, having a larger number of $f_{ij}>0$) and also $\theta \in \{0.2, 0.5, 0.8\}$. 
In total, this gives 240 instances.
We refer to the resulting instance set as \texttt{pm-$r$-$R$-$\theta$}, e.g., 
\seta\ are the instances used in~\cite{berman2018multiple}. 

\def\basic{\texttt{b}}
\def\frac{\texttt{f}}
\def\heur{\texttt{fh}}
\def\all{\texttt{fhp}}

\subsection{Assessing the effectiveness of the proposed strategies}

First, we give an overview on the performance of the different formulations and also of the different ingredients of our framework 
(i.e., separation on fractional solutions, preprocessing, initialization, and heuristics). 
In particular, for each of the formulation, we tested the following four configurations:

\begin{itemize}
\item \basic: In this basic setting, we only use the separation for integer solutions, and do not use the preprocessing, initialization, nor the (starting and primal) heuristic.

\item \frac: In this setting, we also do the separation for fractional solutions.

\item \heur: This is setting \frac\ together with the starting heuristic and the primal heuristic.

\item \all: This is setting \heur\ together with the preprocessing and also the initialization, i.e., all ingredients of our framework are turned on.

\end{itemize}

The computational study was carried out on all instances with up to 400 nodes 
(these are 120 instances). 
In Figures~\ref{fig:run1}-\ref{fig:run4} we report the performance profile plots of the runtime to optimality, while in Figures~\ref{fig:gap1}-\ref{fig:gap4}
we report the performance profile plots of the attained optimality gap $g [\%]$ (calculated as $100 \cdot (UB-z^*)/(z^*)$, where $UB$ is the upper bound and $z^*$ is the value of the best solution found) for all formulations, instances and settings.


\begin{figure}
\begin{subfigure}[b]{.5\linewidth}
\centering \includegraphics[width=.99\linewidth]{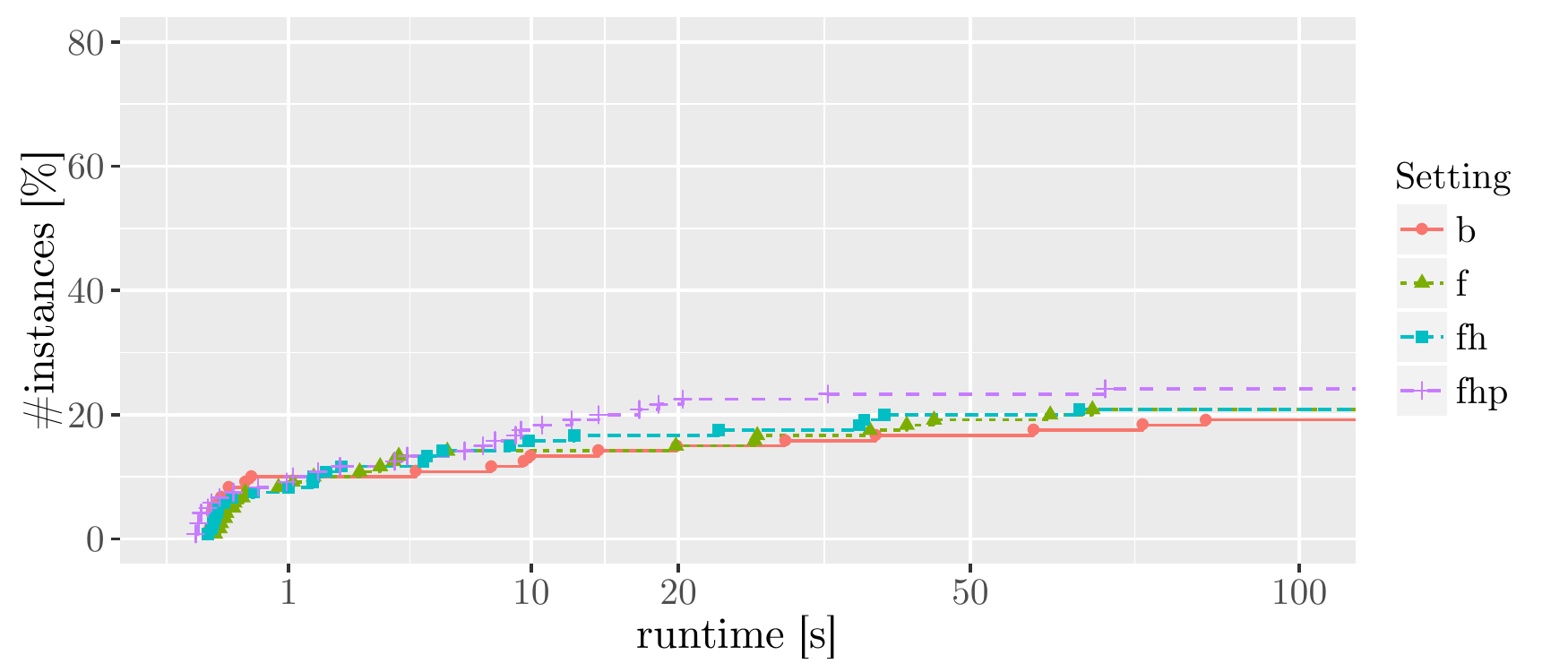}
\caption{Runtime performance of \first}\label{fig:run1}
\end{subfigure}%
\begin{subfigure}[b]{.5\linewidth}
\centering
\centering \includegraphics[width=.99\linewidth]{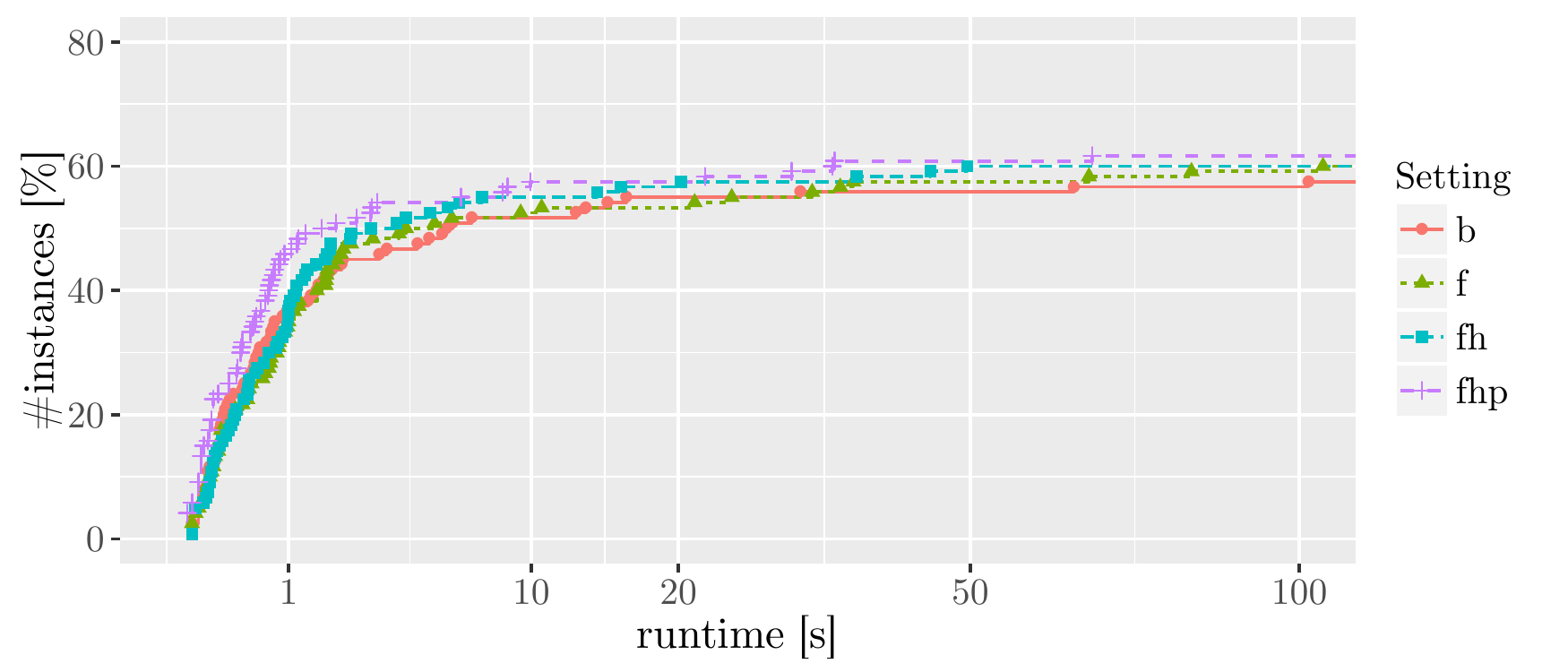}
\caption{Runtime performance of \second}\label{fig:run2}
\end{subfigure}
\begin{subfigure}[b]{.5\linewidth}
\centering
\includegraphics[width=.99\linewidth]{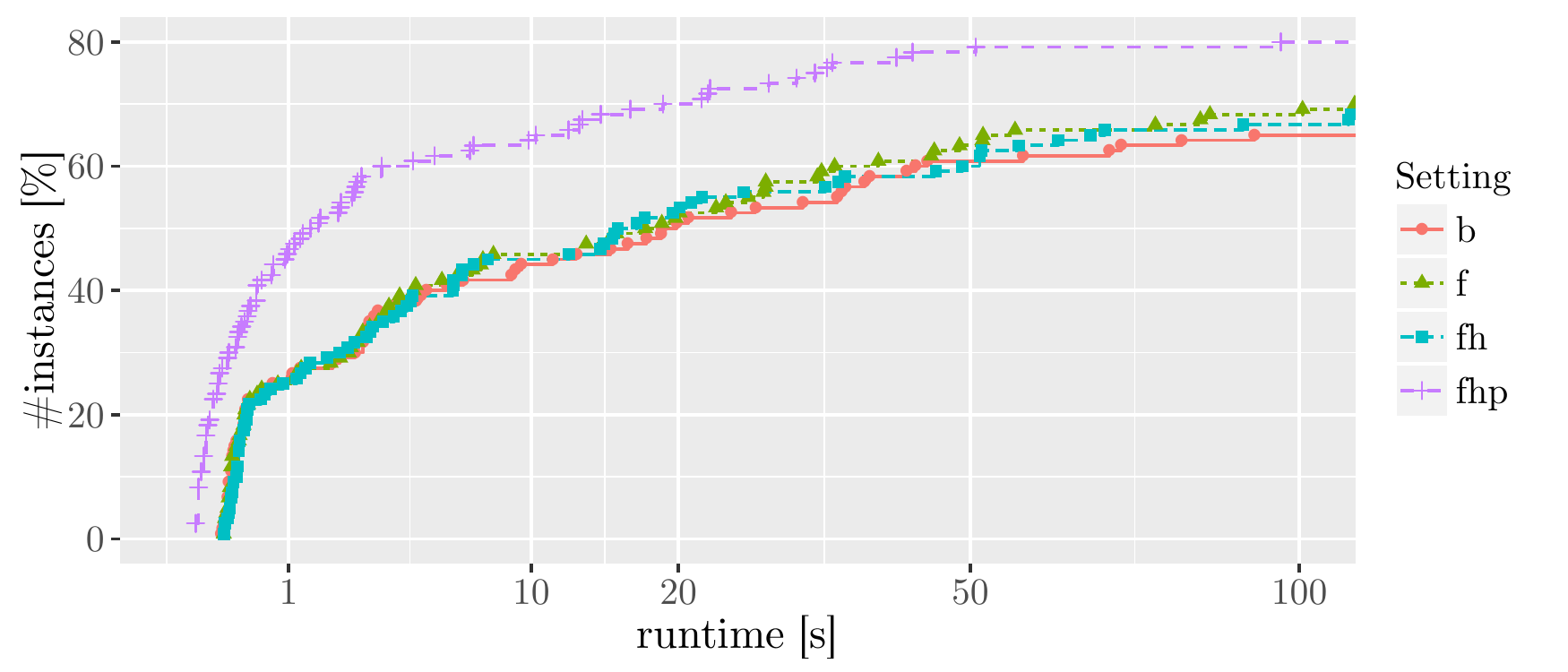}
\caption{Runtime performance of \third}\label{fig:run3}
\end{subfigure}%
\begin{subfigure}[b]{.5\linewidth}
\centering
\includegraphics[width=.99\linewidth]{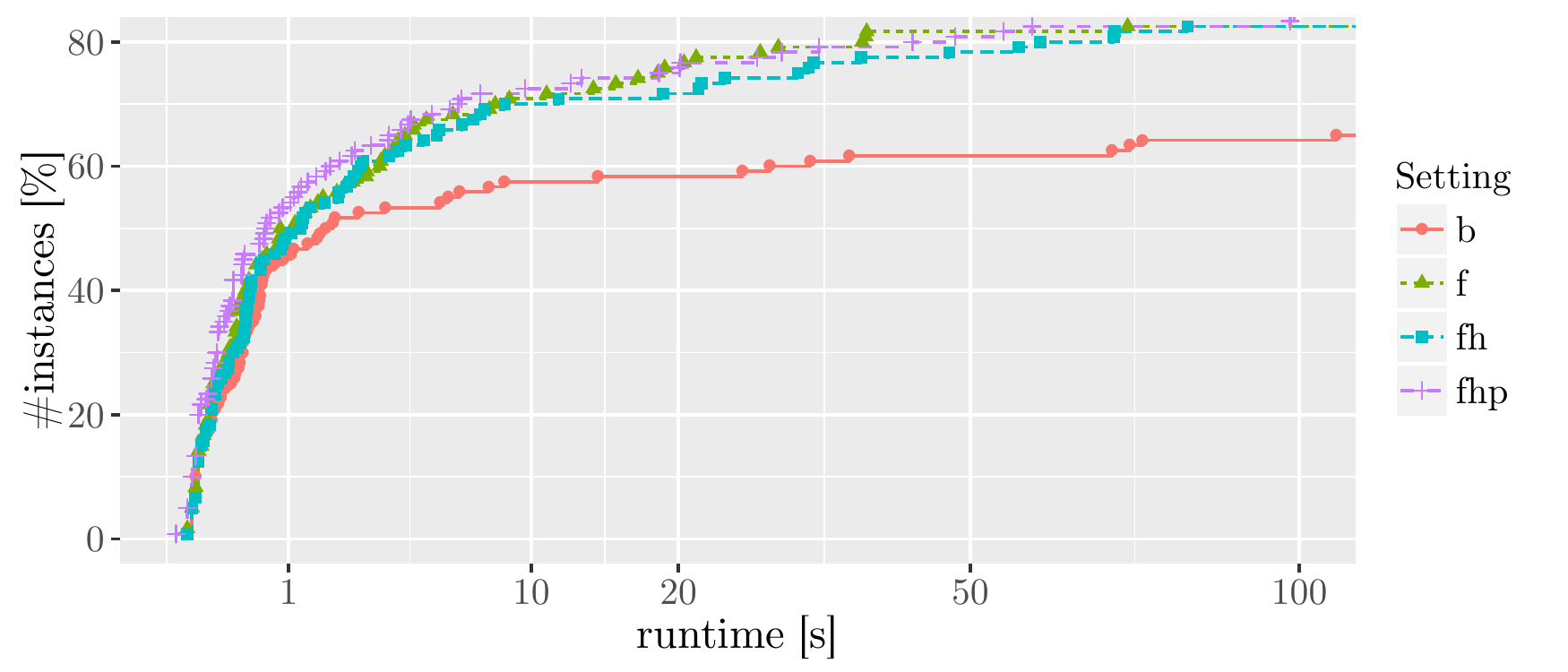}
\caption{Runtime performance of \fourth}\label{fig:run4}
\end{subfigure}
\caption{Performance profiles of runtimes for~\first-\fourth\ and different settings}\label{fig:run}
\end{figure}

\begin{figure}
\begin{subfigure}[b]{.5\linewidth}
\centering \includegraphics[width=.99\linewidth]{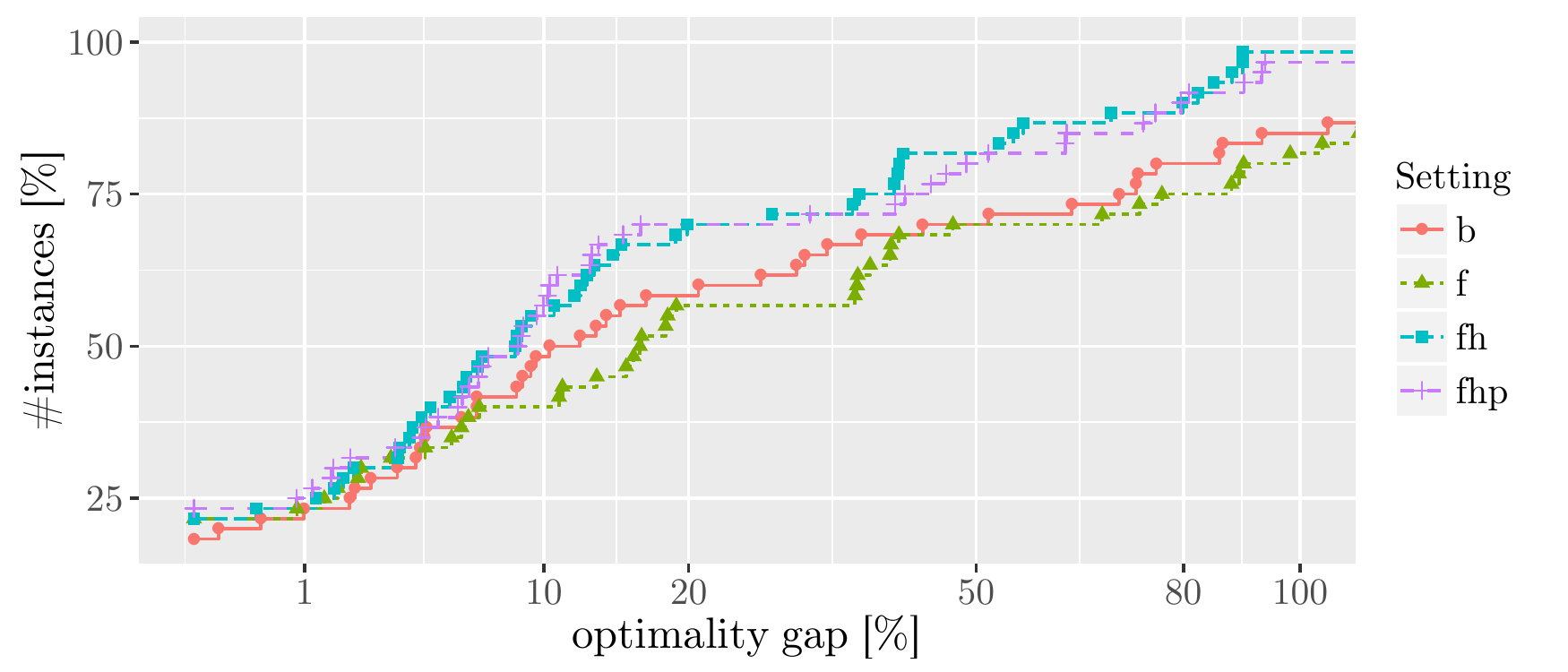}
\caption{Optimality gap performance of \first}\label{fig:gap1}
\end{subfigure}%
\begin{subfigure}[b]{.5\linewidth}
\centering
\centering \includegraphics[width=.99\linewidth]{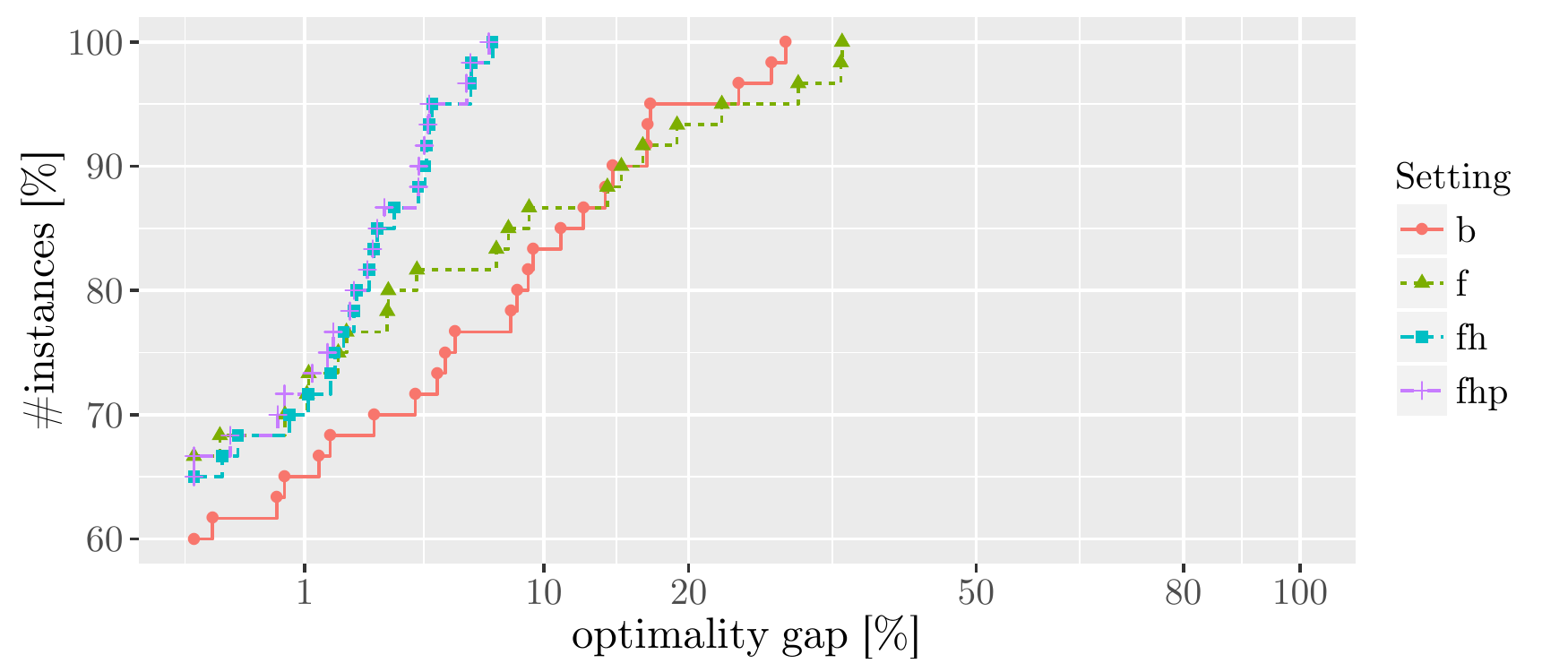}
\caption{Optimality gap performance of \second}\label{fig:gap2}
\end{subfigure}
\begin{subfigure}[b]{.5\linewidth}
\centering
\includegraphics[width=.99\linewidth]{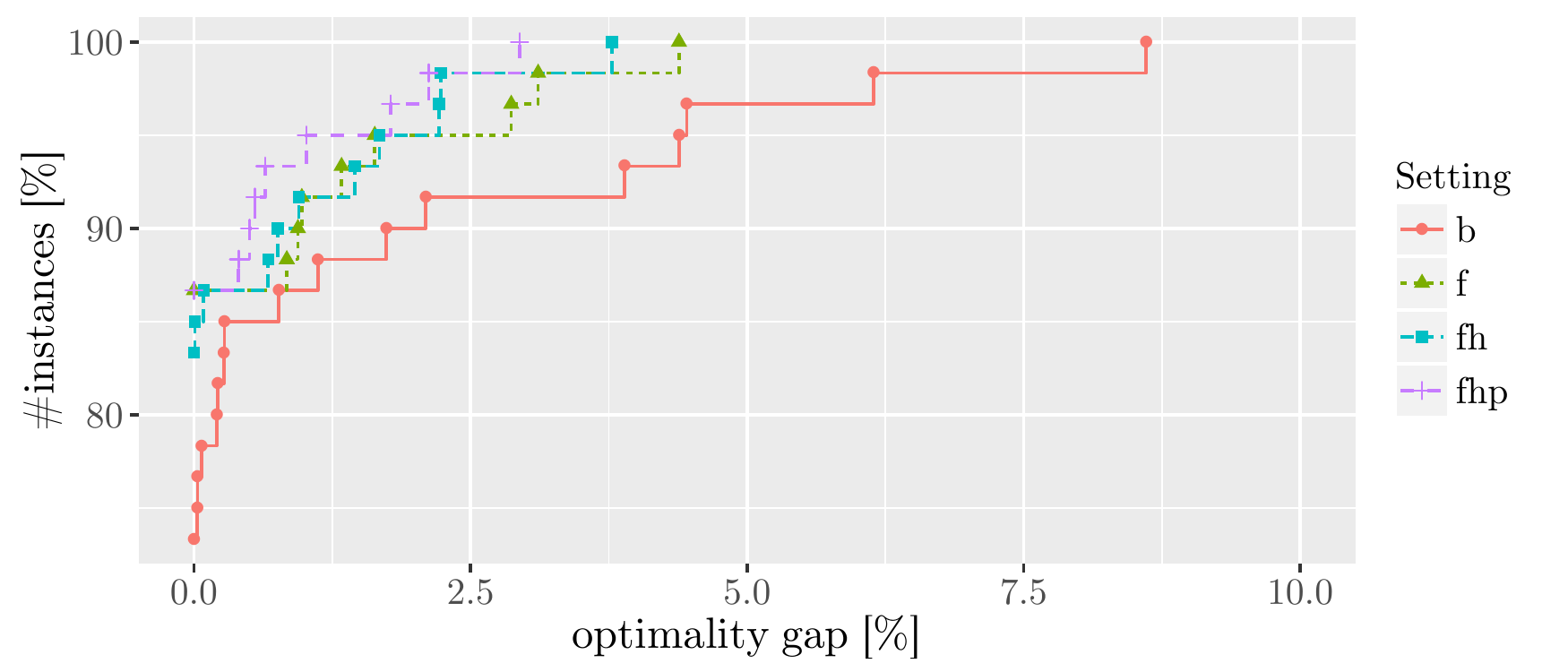}
\caption{Optimality gap performance of \third}\label{fig:gap3}
\end{subfigure}%
\begin{subfigure}[b]{.5\linewidth}
\centering
\includegraphics[width=.99\linewidth]{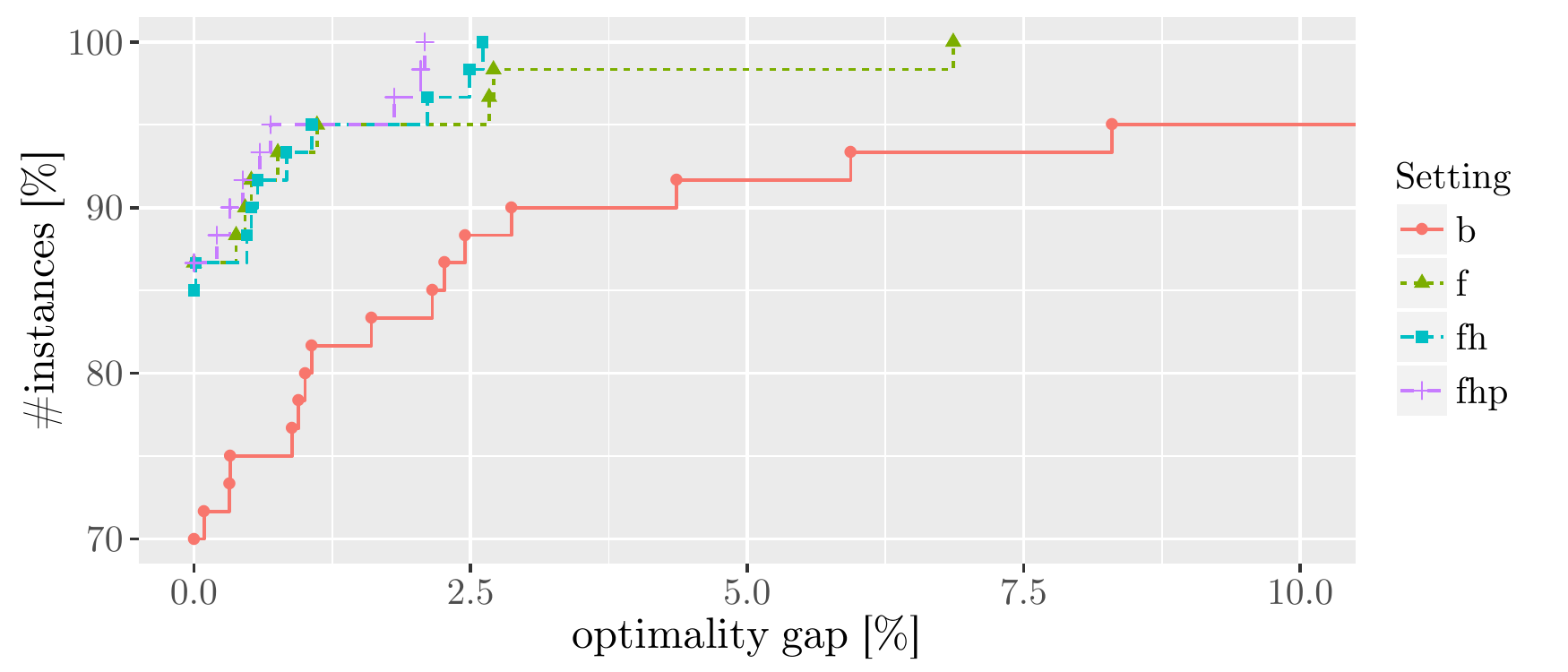}
\caption{Optimality gap performance of \fourth}\label{fig:gap4}
\end{subfigure}
\caption{Performance profiles of the attained optimality gaps for~\first-\fourth\ and different settings}\label{fig:gap}
\end{figure}

From the
results reported in the performance profiles two relevant conclusions can de drawn:
First, in terms of running times and attained gaps,
the \fourth\ formulation seems to be the most effective one. With the \fourth\ model
runtimes below ten seconds can be achieved for more than 70\% of the instances, while using \first, only around 10\%
of the instances can be solved to optimality within ten seconds.
Furthermore, when looking at the attained gaps, the situation is similar;
while the branch-and-cut based on the \fourth\ model is capable of
attaining optimality gaps below 2.5\% for all instances
with the \all\ configuration,
the branch-and-cut corresponding to \first\ computes
such gaps only for about 25\% of the instances with the \all\ configuration.

Secondly,
the different algorithmic enhancements do improve the
overall performance of the corresponding branch-and-cut algorithm.
In other words, by going from \basic\ to \all\ we get shorter runtimes
and smaller optimality gaps. 
Moreover, the impact of such enhancements, seems to be stronger
for the \third\ and \fourth\ formulations than for the \first\ and \second\.
Additionally, when focusing on the plots corresponding to the \third\ and \fourth\ models, it seems that incorporating the separation on fractional solutions
brings the best marginal improvement (i.e., \basic\ with respect to \frac),
specially in terms of the attained gaps.
However, in both cases, the results show that the approach
combining all features, \all, presents the best performance
(in terms of runtimes and attained gaps).
Due to the dominance of the \fourth\ formulation with the \all\ configuration, 
the results presented in remainder of this section correspond to this setting.

\subsection{Further details on algorithmic performance and solutions nature}

In Tables~\ref{ta:ta11}-\ref{ta:ta23}, we report detailed results for each instance set using formulation \fourth\ with configuration \all;
each instance set is induced by a particular setting of $r$, $R$ and $\theta$
(as explained in Section~\ref{subsec:instdesc}).
In these Tables, we report for each instance 
(identified by the number in column ``id'') the number of nodes 
(column ``$|V|$'', recall that in these instances we have $\fac=\cust=V$), 
the number of $f_{\cdot\cdot}=1$ (column ``$\#C1$''), 
the number of fractional $f_{\cdot\cdot} >0$ (column ``$\#CP$''), 
the runtime in seconds (column ``$t [s]$'', with TL indicating that the timelimit of 600 seconds was reached), 
the upper bound (column ``$UB$''), 
the value of the best solution found (column ``$z^*$'', in bold when it corresponds to the optimum value), 
the optimality gap (column ``$g [\%]$''), 
the number of nodes in the branch-and-bound tree (column ``$\#BBn$''), 
the time spent at the root node (column ``$t_r [s]$''), 
the upper bound at the root node (column ``$UB_r$''), 
the optimality gap at the root node 
(column ``$g_r [\%]$'', calculated as $100 \cdot (UB_r-z^*)/(z^*)$), 
the runtime of the starting heuristic (column ``$t_H [s]$'')
the value of the starting heuristic solution (column ``$z_H$'', in bold when it coincides to the optimum value), 
the primal gap between this solution and the best solution found 
(column ``$g_H [\%]$'', calculated as $(z_H-z^*)/(z_H)$), 
the number of locations with more than one opened facility in the best found solution (column ``$\#CL$''), 
and the maximum number of facilities opened at a single location in the best found solution (column ``$mCL$'').

From the results reported in this table, we can observe that the computational
difficulty is strongly influenced by the value of $|V|$ (i.e., size of the problem), instances with $|V| \geq 500$ can rarely be solved to optimality
(as can be seen from column ``$t [s]$'' and ``$g [\%]$''). 
Moreover, for the case of the instance set \seta,
one of the instances with $|V| = 900$ could not be solved due to memory limit issues.
Additionally, it is interesting to point out that, for a given value of $|V|$,
increasing the value of $K$ has a clear effect on the effectiveness of the algorithm.
On the one hand, for the smaller instances ($|V|\leq 400$), increasing $K$ results
in an increase of the runtimes (which is particularly clear for $|V| = 400$);
on the other hand, for larger instances (which typically reach the timelimit),
increasing the value of $K$ results in an improvement of the attained optimality gaps
(as can be seen from column ``$g [\%]$'').
Along the same line, the number of explored branch-and-bound nodes
(shown in column ``$\#BBn$''), also presents an interesting behavior. 
For small instances ($|V|\leq 200$), very few nodes are explored,
most likely because the initialization and the separation allow to compute
very tight dual bounds at a very early stage of the optimization process.
Likewise, for larger instances ($|V|\geq 700$), also very few nodes are explored;
however, in these cases, it is due to the large size of the induced linear programming models which results in a more time consuming separation process.
Such behavior is verified by the by longer runtimes required to process the root node
(column ``$t_r [s]$'') and the poorer quality of the root-node solutions
(which can be seen from columns ``$UB_r$'' and ``$g_r [\%]$'').
Therefore, it is only for intermediate size instances 
($300\leq |V|\leq 600$), that more branch-and-bound nodes are explored;
this is due to the moderate size of the resulting linear programming models 
and a less (computationally) expensive cut separation.

From columns ``$t_H [s]$'', ``$z^H$''  and ``$g_H [\%]$'',
we can clearly observe that the implemented starting heuristic
is capable of computing remarkably good solutions 
for the six groups of instances.
Moreover, on the contrary to the measures discussed in the previous paragraph,
the performance of the starting heuristic, 
specially the quality of the computed solutions (measured by the values reported in column ``$g_H [\%]$'')  seems to be insensitive to the values of $|V|$ and $K$ .
Recall that the starting heuristic builds upon one of the 
approaches outlined in~\cite{berman2018multiple},
which is based on the greedy  approximation algorithm of~\cite[][]{nemhauser1978analysis} for submodular optimization problems.

In~\cite{berman2018multiple}, the authors test their approach only on
instances from set \seta.
When comparing their results with those attained by our approach,
and reported in Table~\ref{ta:ta11}, we observe the following facts:
(i) while in~\cite{berman2018multiple}, optimality is proven
for 5 instances, we manage to prove it for 18 instances;
(ii) we improve the solution value obtained by~\cite{berman2018multiple}
for 7 additional instances; 
and, (iii) our primal-dual nature of our approach allows to always
account with a certificate of quality of the attained solution.

\paragraph{Analyzing the solution-characteristics}
We will now analyze and discuss key characteristics of the computed solutions,
and how they are influenced by the instances structure, which is given by $r$, $R$ and $\theta$, and $|V|$ and $K$.

Tables~\ref{ta:ta11},~\ref{ta:ta12} and~\ref{ta:ta13} report the solutions of instance sets \seta, \setb\ and \setc, respectively; 
hence, in thee instances we have that $R=20$, $r=5$,
and their only difference is the value of $\theta$ (0.2, 0.5 and 0.8 respectively).
From columns $z^*$, 
we can clearly see that the computed solutions in these three sets have very similar objective values.
The deployment of the co-locations deploy on the attained
solutions is also very similar; the values reported in columns  $\#CL$ and $mCL$,
let us conclude that different values of $\theta$ do not necessarily
influence on the need of co-locating facilities at different number locations (columns $\#CL$),
nor on the number of co-located facilities (columns  $mCL$).
In a sense, this result is counterintuitive;
due to the definition of~\ref{eq:cov},
one would expect that by increasing the value of $\theta$,
we would reduce the number of location where more than one facility is opened
(and, complementary, the number of opened facilities in such locations).

On the contrary to the above described behavior,
for the instance sets \setd, \sete\ and \setf\ (which are given by $R=25$ and $r=10$),
it is possible observe a moderate influence of the value of $\theta$,
as can be seen from Tables~\ref{ta:ta21},~\ref{ta:ta22} and~\ref{ta:ta23},
respectively.
As expected, smaller values of $\theta$ ($\theta = 0.2$) lead 
to solutions with more locations hosting multiple facilities
and more facilities located at those locations (see columns $\#CL$ and $mCL$),
when compared to having greater values of $\theta$ ($\theta = 0.8$).
Furthermore, the influence of $\theta$ 
can be also observed when comparing the attained
objective function values (columns ``$z^*$'');
as expected, greater values of $\theta$ lead to less expensive solutions,
as co-location and joint coverage are less emphasized 
(as when having  smaller values of $\theta$).

The difference in the behavior of the solutions
of instance sets \seta, \setb\ and \setc,
with respect to those of \setd, \sete\ and \setf,
is likely to be explained by the different values of $r$ and $R$.
While for the first group of instances $R$ is four times larger
than $r$, for the second, $R$ is only 2.5 times larger than $r$.
Additionally, for the first group, the value of $r$ is half
the value of $r$ for second group. Thus, for the first group of instances, there is a smaller number of facility-customer-pairs which would result in full coverage (see $\#C1$). For example, in the instance number 40, there are 3552 such candidates when $r=5$, while there are $14636$ for $r=10$. Moreover, also the combinations providing partial coverage are most of the time more numerous in the second group of instances. Thus, the first group of instances gives less choices and the problem resembles a little more a
classical maximum coverage problem, while the second group with more available connections potentially allows more exploitation of the benefits of partial coverage which are enhanced by the possibility of having more than one facility located at a given location.


\begin{landscape}
\begin{table}[ht]
\centering
\caption{Detailed results for instance set \seta.\label{ta:ta11}} 
\begingroup\scriptsize
\begin{tabular}{lllll|rrrrrrrrrrrrr}
  \toprule
  id & $|V|$ & K & $\#C1$ & $\#CP$ & $t [s]$ & $UB$ & $z^*$ & $g [\%]$ & $\#BBn$ & $t_r [s]$ & $UB_r$ & $g_r [\%]$ & $t_H [s]$ & $z^H$ & $g_H [\%]$ & $\#CL$ & $mCL$ \\ \midrule
1 & 100 & 5 & 114 & 64 & 0.01 &  14.60000 & \textbf{ 14.60000} & 0.000 & 0 & 0.01 & 14.60000 & 0.000 & 0.00 & \textbf{ 14.60000} & 0.000 & 0 & 1 \\ 
  2 & 100 & 10 & 124 & 76 & 0.02 &  26.79200 & \textbf{ 26.79200} & 0.000 & 0 & 0.02 & 26.79200 & 0.000 & 0.00 & \textbf{ 26.79200} & 0.000 & 0 & 1 \\ 
  3 & 100 & 10 & 116 & 100 & 0.07 &  25.65333 & \textbf{ 25.65333} & 0.000 & 0 & 0.07 & 25.65333 & 0.000 & 0.00 & \textbf{ 25.65333} & 0.000 & 0 & 1 \\ 
  4 & 100 & 20 & 106 & 56 & 0.05 &  35.43200 & \textbf{ 35.43200} & 0.000 & 0 & 0.05 & 35.43200 & 0.000 & 0.00 & \textbf{ 35.43200} & 0.000 & 0 & 1 \\ 
  5 & 100 & 33 & 130 & 114 & 0.14 &  62.21778 & \textbf{ 62.21778} & 0.000 & 1 & 0.14 & 62.21778 & 0.000 & 0.00 & \textbf{ 62.21778} & 0.000 & 0 & 1 \\ 
  6 & 200 & 5 & 296 & 430 & 0.02 &  30.13333 & \textbf{ 30.13333} & 0.000 & 0 & 0.02 & 30.13333 & 0.000 & 0.00 & \textbf{ 30.13333} & 0.000 & 0 & 1 \\ 
  7 & 200 & 10 & 292 & 592 & 0.26 &  50.48124 & \textbf{ 50.48124} & 0.000 & 4 & 0.25 & 50.56167 & 0.159 & 0.01 & \textbf{ 50.48124} & 0.000 & 0 & 1 \\ 
  8 & 200 & 20 & 278 & 418 & 0.13 &  69.79514 & \textbf{ 69.79514} & 0.000 & 0 & 0.13 & 69.79514 & 0.000 & 0.00 & \textbf{ 69.79514} & 0.000 & 0 & 1 \\ 
  9 & 200 & 40 & 302 & 502 & 1.37 & 118.10412 & \textbf{118.10412} & 0.000 & 36 & 1.00 & 118.35876 & 0.216 & 0.02 & 117.95745 & 0.124 & 3 & 2 \\ 
  10 & 200 & 67 & 328 & 900 & 58.07 & 158.93399 & \textbf{158.93399} & 0.000 & 1246 & 6.02 & 160.91652 & 1.247 & 0.09 & \textbf{158.93399} & 0.000 & 4 & 3 \\ 
  11 & 300 & 5 & 534 & 2358 & 0.61 &  62.52753 & \textbf{ 62.52753} & 0.000 & 10 & 0.44 & 63.20786 & 1.088 & 0.01 & \textbf{ 62.52753} & 0.000 & 0 & 1 \\ 
  12 & 300 & 10 & 474 & 1654 & 0.37 &  80.00622 & \textbf{ 80.00622} & 0.000 & 5 & 0.33 & 80.11857 & 0.140 & 0.00 & \textbf{ 80.00622} & 0.000 & 0 & 1 \\ 
  13 & 300 & 30 & 500 & 2004 & 29.07 & 154.42877 & \textbf{154.42877} & 0.000 & 475 & 4.08 & 157.06122 & 1.705 & 0.06 & 154.42506 & 0.002 & 0 & 1 \\ 
  14 & 300 & 60 & 502 & 1586 & \textbf{TL} & 207.97964 & 207.54577 & 0.209 & 4575 & 12.74 & 209.92442 & 1.146 & 0.15 & 206.64859 & 0.434 & 0 & 1 \\ 
  15 & 300 & 100 & 518 & 2000 & \textbf{TL} & 252.19124 & 250.45467 & 0.693 & 1327 & 28.12 & 253.87769 & 1.367 & 0.33 & 250.32876 & 0.050 & 2 & 2 \\ 
  16 & 400 & 5 & 874 & 6918 & 2.53 & 108.96806 & \textbf{108.96806} & 0.000 & 26 & 1.16 & 111.27877 & 2.121 & 0.01 & 108.03455 & 0.864 & 0 & 1 \\ 
  17 & 400 & 10 & 840 & 5902 & 4.37 & 147.92027 & \textbf{147.92027} & 0.000 & 55 & 1.89 & 150.69428 & 1.875 & 0.02 & \textbf{147.92027} & 0.000 & 0 & 1 \\ 
  18 & 400 & 40 & 738 & 4070 & \textbf{TL} & 243.01593 & 238.04555 & 2.088 & 1307 & 24.92 & 246.37574 & 3.499 & 0.10 & 237.38353 & 0.279 & 1 & 2 \\ 
  19 & 400 & 80 & 816 & 4932 & \textbf{TL} & 329.00121 & 323.15514 & 1.809 & 170 & 101.76 & 329.23856 & 1.883 & 0.94 & 323.07301 & 0.025 & 2 & 2 \\ 
  20 & 400 & 133 & 760 & 4170 & \textbf{TL} & 375.11226 & 367.57563 & 2.050 & 156 & 147.06 & 375.25593 & 2.089 & 1.50 & 366.89330 & 0.186 & 3 & 2 \\ 
  21 & 500 & 5 & 1186 & 12832 & 2.26 & 149.16160 & \textbf{149.16160} & 0.000 & 13 & 1.53 & 150.93946 & 1.192 & 0.01 & \textbf{149.16160} & 0.000 & 0 & 1 \\ 
  22 & 500 & 10 & 1090 & 9960 & 38.68 & 178.49851 & \textbf{178.49851} & 0.000 & 278 & 4.35 & 184.47554 & 3.349 & 0.03 & 176.32300 & 1.234 & 0 & 1 \\ 
  23 & 500 & 50 & 1196 & 11222 & \textbf{TL} & 377.38754 & 363.88111 & 3.712 & 157 & 83.84 & 377.80670 & 3.827 & 0.75 & 363.65744 & 0.062 & 3 & 2 \\ 
  24 & 500 & 100 & 1174 & 11328 & \textbf{TL} & 451.08139 & 439.76165 & 2.574 & 21 & 303.16 & 451.40047 & 2.647 & 1.79 & 439.13596 & 0.142 & 2 & 2 \\ 
  25 & 500 & 167 & 1258 & 13962 & \textbf{TL} & 489.27113 & 482.60720 & 1.381 & 8 & 469.44 & 489.36607 & 1.400 & 5.67 & 482.41662 & 0.040 & 4 & 2 \\ 
  26 & 600 & 5 & 1706 & 30160 & 76.97 & 221.74392 & \textbf{221.74392} & 0.000 & 167 & 6.78 & 234.29056 & 5.658 & 0.04 & \textbf{221.74392} & 0.000 & 0 & 1 \\ 
  27 & 600 & 10 & 1764 & 32768 & \textbf{TL} & 323.17551 & 305.54231 & 5.771 & 144 & 34.65 & 327.64050 & 7.232 & 0.09 & 305.49935 & 0.014 & 1 & 2 \\ 
  28 & 600 & 60 & 1716 & 33584 & \textbf{TL} & 519.85051 & 503.69890 & 3.207 & 3 & 499.19 & 520.03866 & 3.244 & 2.68 & 503.69889 & 0.000 & 1 & 2 \\ 
  29 & 600 & 120 & 1714 & 28968 & \textbf{TL} & 575.12023 & 560.94685 & 2.527 & 0 & 604.38 & 575.12023 & 2.527 & 4.96 & 560.88227 & 0.012 & 5 & 2 \\ 
  30 & 600 & 200 & 1570 & 21010 & \textbf{TL} & 593.24141 & 590.50481 & 0.463 & 0 & 604.15 & 593.24141 & 0.463 & 12.59 & 590.27327 & 0.039 & 6 & 2 \\ 
  31 & 700 & 5 & 2542 & 64420 & \textbf{TL} & 331.11567 & 314.32281 & 5.343 & 221 & 14.60 & 342.73355 & 9.039 & 0.04 & 314.32281 & 0.000 & 0 & 1 \\ 
  32 & 700 & 10 & 2218 & 50474 & \textbf{TL} & 397.36246 & 374.87069 & 6.000 & 76 & 32.41 & 398.47059 & 6.295 & 0.13 & 374.87069 & 0.000 & 0 & 1 \\ 
  33 & 700 & 70 & 2268 & 49250 & \textbf{TL} & 640.18003 & 621.39845 & 3.022 & 0 & 607.97 & 640.18003 & 3.022 & 3.40 & 620.85758 & 0.087 & 2 & 2 \\ 
  34 & 700 & 140 & 2450 & 58282 & \textbf{TL} & 686.32210 & 679.62662 & 0.985 & 0 & 627.53 & 686.32210 & 0.985 & 4.92 & 679.45459 & 0.025 & 1 & 2 \\ 
  35 & 800 & 5 & 3424 & 143422 & \textbf{TL} & 498.76943 & 460.69826 & 8.264 & 22 & 72.53 & 500.36055 & 8.609 & 0.05 & 460.69826 & 0.000 & 0 & 1 \\ 
  36 & 800 & 10 & 3052 & 91980 & \textbf{TL} & 517.48254 & 479.24997 & 7.978 & 25 & 68.11 & 517.94425 & 8.074 & 0.19 & 479.24997 & 0.000 & 0 & 1 \\ 
  37 & 800 & 80 & 2762 & 71842 & \textbf{TL} & 745.44345 & 727.12707 & 2.519 & 0 & 604.20 & 745.44345 & 2.519 & 4.68 & 726.84905 & 0.038 & 0 & 1 \\ 
  38 & 900 & 5 & 4520 & 224438 & \textbf{TL} & 619.22839 & 563.03405 & 9.981 & 9 & 108.97 & 620.35317 & 10.180 & 0.13 & 561.05866 & 0.352 & 0 & 1 \\ 
  39 & 900 & 10 & 4450 & 227826 & \textbf{TL} & 724.97571 & 682.84189 & 6.170 & 7 & 204.66 & 725.47869 & 6.244 & 0.20 & 682.84189 & 0.000 & 0 & 1 \\ 
  40 & 900 & 80 & * & * & \multicolumn{13}{c}{\emph{not solution available due to memory limit}} \\ 
   \bottomrule
\end{tabular}
\endgroup
\end{table}
\begin{table}[ht]
\centering
\caption{Detailed results for instance set \setb.\label{ta:ta12}} 
\begingroup\scriptsize
\begin{tabular}{lllll|rrrrrrrrrrrrr}
  \toprule
  id & $|V|$ & K & $\#C1$ & $\#CP$ & $t [s]$ & $UB$ & $z^*$ & $g [\%]$ & $\#BBn$ & $t_r [s]$ & $UB_r$ & $g_r [\%]$ & $t_H [s]$ & $z^H$ & $g_H [\%]$ & $\#CL$ & $mCL$ \\ \midrule
1 & 100 & 5 & 114 & 64 & 0.01 &  14.60000 & \textbf{ 14.60000} & 0.000 & 0 & 0.01 & 14.60000 & 0.000 & 0.00 & \textbf{ 14.60000} & 0.000 & 0 & 1 \\ 
  2 & 100 & 10 & 124 & 76 & 0.02 &  26.72000 & \textbf{ 26.72000} & 0.000 & 0 & 0.02 & 26.72000 & 0.000 & 0.00 & \textbf{ 26.72000} & 0.000 & 0 & 1 \\ 
  3 & 100 & 10 & 116 & 100 & 0.04 &  25.63333 & \textbf{ 25.63333} & 0.000 & 0 & 0.04 & 25.63333 & 0.000 & 0.00 & \textbf{ 25.63333} & 0.000 & 0 & 1 \\ 
  4 & 100 & 20 & 106 & 56 & 0.05 &  35.42000 & \textbf{ 35.42000} & 0.000 & 0 & 0.05 & 35.42000 & 0.000 & 0.00 & \textbf{ 35.42000} & 0.000 & 0 & 1 \\ 
  5 & 100 & 33 & 130 & 114 & 0.13 &  62.11111 & \textbf{ 62.11111} & 0.000 & 1 & 0.13 & 62.11111 & 0.000 & 0.00 & \textbf{ 62.11111} & 0.000 & 0 & 1 \\ 
  6 & 200 & 5 & 296 & 430 & 0.02 &  30.13333 & \textbf{ 30.13333} & 0.000 & 0 & 0.02 & 30.13333 & 0.000 & 0.00 & \textbf{ 30.13333} & 0.000 & 0 & 1 \\ 
  7 & 200 & 10 & 292 & 592 & 0.22 &  50.32578 & \textbf{ 50.32578} & 0.000 & 0 & 0.22 & 50.32578 & 0.000 & 0.00 & \textbf{ 50.32578} & 0.000 & 0 & 1 \\ 
  8 & 200 & 20 & 278 & 418 & 0.14 &  69.66030 & \textbf{ 69.66030} & 0.000 & 0 & 0.14 & 69.66030 & 0.000 & 0.01 & \textbf{ 69.66030} & 0.000 & 0 & 1 \\ 
  9 & 200 & 40 & 302 & 502 & 0.63 & 117.59007 & \textbf{117.59007} & 0.000 & 20 & 0.54 & 117.65543 & 0.056 & 0.04 & \textbf{117.59007} & 0.000 & 3 & 2 \\ 
  10 & 200 & 67 & 328 & 900 & 2.44 & 157.89400 & \textbf{157.89400} & 0.000 & 19 & 1.80 & 158.13614 & 0.153 & 0.07 & 157.77459 & 0.076 & 3 & 3 \\ 
  11 & 300 & 5 & 534 & 2358 & 0.35 &  62.07970 & \textbf{ 62.07970} & 0.000 & 1 & 0.35 & 62.07970 & 0.000 & 0.00 & \textbf{ 62.07970} & 0.000 & 0 & 1 \\ 
  12 & 300 & 10 & 474 & 1654 & 0.34 &  79.62889 & \textbf{ 79.62889} & 0.000 & 0 & 0.34 & 79.62889 & 0.000 & 0.00 & \textbf{ 79.62889} & 0.000 & 0 & 1 \\ 
  13 & 300 & 30 & 500 & 2004 & 3.57 & 152.75187 & \textbf{152.75187} & 0.000 & 45 & 2.15 & 153.45741 & 0.462 & 0.05 & 152.72967 & 0.015 & 1 & 2 \\ 
  14 & 300 & 60 & 502 & 1586 & 12.33 & 205.58821 & \textbf{205.58821} & 0.000 & 143 & 5.23 & 206.21113 & 0.303 & 0.13 & 204.76453 & 0.402 & 0 & 1 \\ 
  15 & 300 & 100 & 518 & 2000 & 99.1 & 247.41430 & \textbf{247.41430} & 0.000 & 1160 & 11.22 & 248.39762 & 0.397 & 0.19 & 246.78161 & 0.256 & 2 & 2 \\ 
  16 & 400 & 5 & 874 & 6918 & 1.24 & 107.40504 & \textbf{107.40504} & 0.000 & 8 & 0.98 & 108.24489 & 0.782 & 0.01 & 106.29763 & 1.042 & 0 & 1 \\ 
  17 & 400 & 10 & 840 & 5902 & 2.12 & 145.77517 & \textbf{145.77517} & 0.000 & 14 & 1.48 & 146.76176 & 0.677 & 0.01 & \textbf{145.77517} & 0.000 & 0 & 1 \\ 
  18 & 400 & 40 & 738 & 4070 & \textbf{TL} & 234.84788 & 233.81358 & 0.442 & 2675 & 14.13 & 237.47458 & 1.566 & 0.17 & 233.32576 & 0.209 & 0 & 1 \\ 
  19 & 400 & 80 & 816 & 4932 & \textbf{TL} & 318.65316 & 317.62369 & 0.324 & 848 & 49.02 & 319.45887 & 0.578 & 0.53 & 317.23561 & 0.122 & 6 & 2 \\ 
  20 & 400 & 133 & 760 & 4170 & \textbf{TL} & 364.64616 & 362.48410 & 0.596 & 250 & 61.27 & 365.12343 & 0.728 & 1.84 & 361.74052 & 0.206 & 4 & 2 \\ 
  21 & 500 & 5 & 1186 & 12832 & 1.17 & 146.17600 & \textbf{146.17600} & 0.000 & 0 & 1.17 & 146.17600 & 0.000 & 0.02 & \textbf{146.17600} & 0.000 & 0 & 1 \\ 
  22 & 500 & 10 & 1090 & 9960 & 14.48 & 174.86157 & \textbf{174.86157} & 0.000 & 92 & 3.64 & 177.70824 & 1.628 & 0.04 & 172.41694 & 1.418 & 0 & 1 \\ 
  23 & 500 & 50 & 1196 & 11222 & \textbf{TL} & 360.21888 & 354.87291 & 1.506 & 306 & 64.50 & 361.15181 & 1.769 & 0.45 & 354.03810 & 0.236 & 0 & 1 \\ 
  24 & 500 & 100 & 1174 & 11328 & \textbf{TL} & 434.69475 & 430.74256 & 0.918 & 142 & 100.63 & 435.07190 & 1.005 & 1.23 & 430.71258 & 0.007 & 3 & 2 \\ 
  25 & 500 & 167 & 1258 & 13962 & \textbf{TL} & 478.22622 & 476.03060 & 0.461 & 55 & 242.32 & 478.31624 & 0.480 & 5.67 & 476.03059 & 0.000 & 3 & 2 \\ 
  26 & 600 & 5 & 1706 & 30160 & 26.56 & 214.31495 & \textbf{214.31495} & 0.000 & 70 & 5.54 & 221.24374 & 3.233 & 0.06 & \textbf{214.31495} & 0.000 & 0 & 1 \\ 
  27 & 600 & 10 & 1764 & 32768 & \textbf{TL} & 299.53133 & 290.93944 & 2.953 & 181 & 20.42 & 304.76118 & 4.751 & 0.05 & 290.34134 & 0.206 & 0 & 1 \\ 
  28 & 600 & 60 & 1716 & 33584 & \textbf{TL} & 495.15376 & 487.18178 & 1.636 & 22 & 229.52 & 495.38495 & 1.684 & 1.70 & 486.87124 & 0.064 & 0 & 1 \\ 
  29 & 600 & 120 & 1714 & 28968 & \textbf{TL} & 553.70093 & 547.87855 & 1.063 & 19 & 297.53 & 553.83774 & 1.088 & 2.77 & 547.72427 & 0.028 & 3 & 3 \\ 
  30 & 600 & 200 & 1570 & 21010 & \textbf{TL} & 584.76703 & 583.39450 & 0.235 & 3 & 539.43 & 584.81751 & 0.244 & 8.68 & 583.26436 & 0.022 & 4 & 2 \\ 
  31 & 700 & 5 & 2542 & 64420 & 560.85 & 302.45176 & \textbf{302.45176} & 0.000 & 434 & 16.12 & 319.85636 & 5.755 & 0.06 & \textbf{302.45176} & 0.000 & 0 & 1 \\ 
  32 & 700 & 10 & 2218 & 50474 & \textbf{TL} & 367.70970 & 357.83233 & 2.760 & 202 & 27.44 & 371.81872 & 3.909 & 0.09 & 357.83233 & 0.000 & 1 & 2 \\ 
  33 & 700 & 70 & 2268 & 49250 & \textbf{TL} & 609.70806 & 600.68651 & 1.502 & 6 & 421.93 & 609.81106 & 1.519 & 1.75 & 600.51548 & 0.028 & 1 & 2 \\ 
  34 & 700 & 140 & 2450 & 58282 & \textbf{TL} & 668.80616 & 665.46411 & 0.502 & 0 & 612.04 & 668.80616 & 0.502 & 10.67 & 665.35981 & 0.016 & 6 & 2 \\ 
  35 & 800 & 5 & 3424 & 143422 & \textbf{TL} & 457.12331 & 432.06141 & 5.801 & 32 & 35.08 & 457.97255 & 5.997 & 0.05 & 431.86545 & 0.045 & 0 & 1 \\ 
  36 & 800 & 10 & 3052 & 91980 & \textbf{TL} & 478.35183 & 453.18283 & 5.554 & 48 & 43.35 & 478.79706 & 5.652 & 0.22 & 453.18283 & 0.000 & 0 & 1 \\ 
  37 & 800 & 80 & 2762 & 71842 & \textbf{TL} & 712.12631 & 704.15161 & 1.133 & 1 & 583.37 & 712.12631 & 1.133 & 6.85 & 704.08931 & 0.009 & 1 & 2 \\ 
  38 & 900 & 5 & 4520 & 224438 & \textbf{TL} & 562.74327 & 526.27128 & 6.930 & 18 & 136.65 & 563.82824 & 7.136 & 0.13 & 526.27128 & 0.000 & 0 & 1 \\ 
  39 & 900 & 10 & 4450 & 227826 & \textbf{TL} & 663.37077 & 637.70118 & 4.025 & 8 & 185.74 & 664.02808 & 4.128 & 0.19 & 637.70118 & 0.000 & 0 & 1 \\ 
  40 & 900 & 90 & 3552 & 140956 & \textbf{TL} & 833.23948 & 825.40900 & 0.949 & 0 & 605.08 & 833.23948 & 0.949 & 4.00 & 824.80465 & 0.073 & 2 & 2 \\ 
   \bottomrule
\end{tabular}
\endgroup
\end{table}
\begin{table}[ht]
\centering
\caption{Detailed results for instance set \setc.\label{ta:ta13}} 
\begingroup\scriptsize
\begin{tabular}{lllll|rrrrrrrrrrrrr}
  \toprule
  id & $|V|$ & K & $\#C1$ & $\#CP$ & $t [s]$ & $UB$ & $z^*$ & $g [\%]$ & $\#BBn$ & $t_r [s]$ & $UB_r$ & $g_r [\%]$ & $t_H [s]$ & $z^H$ & $g_H [\%]$ & $\#CL$ & $mCL$ \\ \midrule
1 & 100 & 5 & 114 & 64 & 0 &  14.60000 & \textbf{ 14.60000} & 0.000 & 0 & 0.00 & 14.60000 & 0.000 & 0.00 & \textbf{ 14.60000} & 0.000 & 0 & 1 \\ 
  2 & 100 & 10 & 124 & 76 & 0.02 &  26.64800 & \textbf{ 26.64800} & 0.000 & 0 & 0.02 & 26.64800 & 0.000 & 0.00 & \textbf{ 26.64800} & 0.000 & 0 & 1 \\ 
  3 & 100 & 10 & 116 & 100 & 0.04 &  25.61333 & \textbf{ 25.61333} & 0.000 & 0 & 0.04 & 25.61333 & 0.000 & 0.00 & \textbf{ 25.61333} & 0.000 & 0 & 1 \\ 
  4 & 100 & 20 & 106 & 56 & 0.04 &  35.40800 & \textbf{ 35.40800} & 0.000 & 0 & 0.04 & 35.40800 & 0.000 & 0.00 & \textbf{ 35.40800} & 0.000 & 0 & 1 \\ 
  5 & 100 & 33 & 130 & 114 & 0.11 &  62.00444 & \textbf{ 62.00444} & 0.000 & 1 & 0.11 & 62.00444 & 0.000 & 0.00 & \textbf{ 62.00444} & 0.000 & 0 & 1 \\ 
  6 & 200 & 5 & 296 & 430 & 0.03 &  30.13333 & \textbf{ 30.13333} & 0.000 & 0 & 0.03 & 30.13333 & 0.000 & 0.01 & \textbf{ 30.13333} & 0.000 & 0 & 1 \\ 
  7 & 200 & 10 & 292 & 592 & 0.12 &  50.20587 & \textbf{ 50.20587} & 0.000 & 0 & 0.12 & 50.20587 & 0.000 & 0.00 & \textbf{ 50.20587} & 0.000 & 0 & 1 \\ 
  8 & 200 & 20 & 278 & 418 & 0.14 &  69.54412 & \textbf{ 69.54412} & 0.000 & 0 & 0.14 & 69.54412 & 0.000 & 0.01 & \textbf{ 69.54412} & 0.000 & 0 & 1 \\ 
  9 & 200 & 40 & 302 & 502 & 0.65 & 117.07603 & \textbf{117.07603} & 0.000 & 10 & 0.47 & 117.08809 & 0.010 & 0.03 & \textbf{117.07603} & 0.000 & 3 & 2 \\ 
  10 & 200 & 67 & 328 & 900 & 1.18 & 157.13121 & \textbf{157.13121} & 0.000 & 1 & 1.18 & 157.13121 & 0.000 & 0.07 & 157.11760 & 0.009 & 4 & 2 \\ 
  11 & 300 & 5 & 534 & 2358 & 0.15 &  61.63188 & \textbf{ 61.63188} & 0.000 & 0 & 0.15 & 61.63188 & 0.000 & 0.01 & \textbf{ 61.63188} & 0.000 & 0 & 1 \\ 
  12 & 300 & 10 & 474 & 1654 & 0.23 &  79.25156 & \textbf{ 79.25156} & 0.000 & 0 & 0.23 & 79.25156 & 0.000 & 0.01 & \textbf{ 79.25156} & 0.000 & 0 & 1 \\ 
  13 & 300 & 30 & 500 & 2004 & 1.56 & 151.10075 & \textbf{151.10075} & 0.000 & 5 & 1.49 & 151.16044 & 0.040 & 0.06 & 151.08654 & 0.009 & 1 & 2 \\ 
  14 & 300 & 60 & 502 & 1586 & 3.59 & 203.81306 & \textbf{203.81306} & 0.000 & 20 & 2.95 & 203.87750 & 0.032 & 0.21 & 203.15461 & 0.324 & 1 & 2 \\ 
  15 & 300 & 100 & 518 & 2000 & 13.02 & 244.77680 & \textbf{244.77680} & 0.000 & 97 & 6.01 & 244.96591 & 0.077 & 0.18 & 244.12746 & 0.266 & 3 & 3 \\ 
  16 & 400 & 5 & 874 & 6918 & 0.55 & 105.84202 & \textbf{105.84202} & 0.000 & 0 & 0.55 & 105.84202 & 0.000 & 0.02 & \textbf{105.84202} & 0.000 & 0 & 1 \\ 
  17 & 400 & 10 & 840 & 5902 & 0.9 & 143.63007 & \textbf{143.63007} & 0.000 & 0 & 0.90 & 143.63007 & 0.000 & 0.02 & \textbf{143.63007} & 0.000 & 0 & 1 \\ 
  18 & 400 & 40 & 738 & 4070 & 20.18 & 230.27463 & \textbf{230.27463} & 0.000 & 114 & 10.18 & 230.82854 & 0.241 & 0.26 & 229.51506 & 0.331 & 1 & 2 \\ 
  19 & 400 & 80 & 816 & 4932 & 42.98 & 312.67871 & \textbf{312.67871} & 0.000 & 107 & 24.40 & 312.92945 & 0.080 & 1.17 & 312.42754 & 0.080 & 2 & 2 \\ 
  20 & 400 & 133 & 760 & 4170 & 98.38 & 357.99786 & \textbf{357.99786} & 0.000 & 252 & 38.16 & 358.28921 & 0.081 & 2.15 & 357.35165 & 0.181 & 1 & 2 \\ 
  21 & 500 & 5 & 1186 & 12832 & 0.79 & 143.19040 & \textbf{143.19040} & 0.000 & 0 & 0.79 & 143.19040 & 0.000 & 0.03 & \textbf{143.19040} & 0.000 & 0 & 1 \\ 
  22 & 500 & 10 & 1090 & 9960 & 5.15 & 171.22463 & \textbf{171.22463} & 0.000 & 18 & 3.78 & 171.74588 & 0.304 & 0.06 & 168.88678 & 1.384 & 0 & 1 \\ 
  23 & 500 & 50 & 1196 & 11222 & 119.93 & 346.94339 & \textbf{346.94339} & 0.000 & 186 & 36.89 & 347.94152 & 0.288 & 0.65 & 346.04908 & 0.258 & 0 & 1 \\ 
  24 & 500 & 100 & 1174 & 11328 & \textbf{TL} & 422.49787 & 422.36762 & 0.031 & 648 & 60.20 & 423.10762 & 0.175 & 1.66 & 421.74503 & 0.148 & 2 & 3 \\ 
  25 & 500 & 167 & 1258 & 13962 & \textbf{TL} & 470.51314 & 470.46275 & 0.011 & 400 & 166.26 & 470.69439 & 0.049 & 8.47 & 470.14281 & 0.068 & 2 & 2 \\ 
  26 & 600 & 5 & 1706 & 30160 & 8.35 & 206.88598 & \textbf{206.88598} & 0.000 & 15 & 4.69 & 209.12648 & 1.083 & 0.05 & \textbf{206.88598} & 0.000 & 0 & 1 \\ 
  27 & 600 & 10 & 1764 & 32768 & 195.75 & 278.78555 & \textbf{278.78555} & 0.000 & 305 & 15.16 & 284.42413 & 2.023 & 0.10 & 278.73863 & 0.017 & 0 & 1 \\ 
  28 & 600 & 60 & 1716 & 33584 & \textbf{TL} & 474.72976 & 472.49510 & 0.473 & 157 & 134.12 & 475.10421 & 0.552 & 1.56 & 472.25887 & 0.050 & 1 & 2 \\ 
  29 & 600 & 120 & 1714 & 28968 & \textbf{TL} & 538.06209 & 536.95262 & 0.207 & 83 & 231.42 & 538.29481 & 0.250 & 3.30 & 536.77142 & 0.034 & 1 & 2 \\ 
  30 & 600 & 200 & 1570 & 21010 & \textbf{TL} & 577.59212 & 577.39683 & 0.034 & 76 & 303.27 & 577.68397 & 0.050 & 7.36 & 576.88482 & 0.089 & 1 & 2 \\ 
  31 & 700 & 5 & 2542 & 64420 & 69.28 & 290.58070 & \textbf{290.58070} & 0.000 & 76 & 10.41 & 298.01495 & 2.558 & 0.04 & \textbf{290.58070} & 0.000 & 0 & 1 \\ 
  32 & 700 & 10 & 2218 & 50474 & 215.16 & 340.81293 & \textbf{340.81293} & 0.000 & 249 & 18.60 & 346.15909 & 1.569 & 0.09 & \textbf{340.81293} & 0.000 & 1 & 2 \\ 
  33 & 700 & 70 & 2268 & 49250 & \textbf{TL} & 584.36914 & 581.92421 & 0.420 & 60 & 208.39 & 584.68282 & 0.474 & 2.18 & 581.26311 & 0.114 & 3 & 2 \\ 
  34 & 700 & 140 & 2450 & 58282 & \textbf{TL} & 653.81786 & 653.34890 & 0.072 & 14 & 457.78 & 653.91650 & 0.087 & 8.14 & 652.87986 & 0.072 & 2 & 2 \\ 
  35 & 800 & 5 & 3424 & 143422 & 467.3 & 403.50618 & \textbf{403.50618} & 0.000 & 113 & 32.17 & 416.18687 & 3.143 & 0.06 & \textbf{403.50618} & 0.000 & 0 & 1 \\ 
  36 & 800 & 10 & 3052 & 91980 & \textbf{TL} & 437.67703 & 427.42371 & 2.399 & 101 & 34.33 & 440.39391 & 3.035 & 0.16 & 425.81790 & 0.377 & 0 & 1 \\ 
  37 & 800 & 80 & 2762 & 71842 & \textbf{TL} & 685.88115 & 683.34521 & 0.371 & 8 & 421.27 & 685.97471 & 0.385 & 3.00 & 683.24213 & 0.015 & 2 & 2 \\ 
  38 & 900 & 5 & 4520 & 224438 & \textbf{TL} & 508.25104 & 490.73630 & 3.569 & 25 & 62.31 & 509.28211 & 3.779 & 0.25 & 490.73630 & 0.000 & 0 & 1 \\ 
  39 & 900 & 10 & 4450 & 227826 & \textbf{TL} & 604.44502 & 592.56047 & 2.006 & 25 & 98.40 & 605.03571 & 2.105 & 0.21 & 592.56047 & 0.000 & 0 & 1 \\ 
  40 & 900 & 90 & 3552 & 140956 & \textbf{TL} & 802.88504 & 800.98919 & 0.237 & 1 & 608.49 & 802.88504 & 0.237 & 4.90 & 800.76111 & 0.028 & 1 & 2 \\ 
   \bottomrule
\end{tabular}
\endgroup
\end{table}
\begin{table}[ht]
\centering
\caption{Detailed results for instance set \setd.\label{ta:ta21}} 
\begingroup\scriptsize
\begin{tabular}{lllll|rrrrrrrrrrrrr}
  \toprule
  id & $|V|$ & K & $\#C1$ & $\#CP$ & $t [s]$ & $UB$ & $z^*$ & $g [\%]$ & $\#BBn$ & $t_r [s]$ & $UB_r$ & $g_r [\%]$ & $t_H [s]$ & $z^H$ & $g_H [\%]$ & $\#CL$ & $mCL$ \\ \midrule
1 & 100 & 5 & 138 & 60 & 0.01 &  17.53333 & \textbf{ 17.53333} & 0.000 & 0 & 0.01 & 17.53333 & 0.000 & 0.00 & \textbf{ 17.53333} & 0.000 & 0 & 1 \\ 
  2 & 100 & 10 & 152 & 78 & 0.04 &  31.79597 & \textbf{ 31.79597} & 0.000 & 0 & 0.04 & 31.79597 & 0.000 & 0.00 &  31.63828 & 0.498 & 1 & 2 \\ 
  3 & 100 & 10 & 136 & 110 & 0.04 &  32.00000 & \textbf{ 32.00000} & 0.000 & 0 & 0.04 & 32.00000 & 0.000 & 0.00 & \textbf{ 32.00000} & 0.000 & 0 & 1 \\ 
  4 & 100 & 20 & 126 & 66 & 0.04 &  43.55556 & \textbf{ 43.55556} & 0.000 & 0 & 0.04 & 43.55556 & 0.000 & 0.00 & \textbf{ 43.55556} & 0.000 & 0 & 1 \\ 
  5 & 100 & 33 & 174 & 116 & 0.18 &  70.43111 & \textbf{ 70.43111} & 0.000 & 0 & 0.18 & 70.43111 & 0.000 & 0.01 & \textbf{ 70.43111} & 0.000 & 1 & 2 \\ 
  6 & 200 & 5 & 408 & 628 & 0.1 &  41.22133 & \textbf{ 41.22133} & 0.000 & 0 & 0.10 & 41.22133 & 0.000 & 0.00 & \textbf{ 41.22133} & 0.000 & 0 & 1 \\ 
  7 & 200 & 10 & 414 & 906 & 0.35 &  67.98513 & \textbf{ 67.98513} & 0.000 & 2 & 0.33 & 68.01286 & 0.041 & 0.00 & \textbf{ 67.98513} & 0.000 & 0 & 1 \\ 
  8 & 200 & 20 & 360 & 658 & 0.26 &  93.38347 & \textbf{ 93.38347} & 0.000 & 0 & 0.26 & 93.38347 & 0.000 & 0.01 & \textbf{ 93.38347} & 0.000 & 0 & 1 \\ 
  9 & 200 & 40 & 418 & 686 & 4.27 & 140.75464 & \textbf{140.75464} & 0.000 & 118 & 1.89 & 141.30842 & 0.393 & 0.06 & 140.73299 & 0.015 & 1 & 2 \\ 
  10 & 200 & 67 & 526 & 1372 & 98.39 & 184.06172 & \textbf{184.06172} & 0.000 & 1378 & 8.88 & 185.75057 & 0.918 & 0.08 & 183.91174 & 0.082 & 3 & 2 \\ 
  11 & 300 & 5 & 1004 & 3968 & 0.62 &  96.19941 & \textbf{ 96.19941} & 0.000 & 5 & 0.59 & 96.47004 & 0.281 & 0.01 & \textbf{ 96.19941} & 0.000 & 0 & 1 \\ 
  12 & 300 & 10 & 814 & 2756 & 1.03 & 121.76729 & \textbf{121.76729} & 0.000 & 15 & 0.82 & 122.23208 & 0.382 & 0.01 & \textbf{121.76729} & 0.000 & 0 & 1 \\ 
  13 & 300 & 30 & 902 & 3342 & 48.08 & 200.24834 & \textbf{200.24834} & 0.000 & 474 & 9.66 & 202.68305 & 1.216 & 0.07 & 200.24169 & 0.003 & 0 & 1 \\ 
  14 & 300 & 60 & 804 & 2722 & \textbf{TL} & 247.77258 & 247.41956 & 0.143 & 2809 & 27.04 & 249.51119 & 0.845 & 0.27 & 247.19514 & 0.091 & 0 & 1 \\ 
  15 & 300 & 100 & 910 & 3470 & \textbf{TL} & 289.10723 & 285.98499 & 1.092 & 573 & 46.68 & 289.89205 & 1.366 & 0.45 & 285.62724 & 0.125 & 4 & 2 \\ 
  16 & 400 & 5 & 1974 & 13784 & 6.31 & 181.42063 & \textbf{181.42063} & 0.000 & 32 & 2.03 & 186.76530 & 2.946 & 0.02 & \textbf{181.42063} & 0.000 & 0 & 1 \\ 
  17 & 400 & 10 & 1738 & 11266 & 197.47 & 220.59393 & \textbf{220.59393} & 0.000 & 1298 & 6.95 & 227.75208 & 3.245 & 0.06 & 219.60947 & 0.448 & 0 & 1 \\ 
  18 & 400 & 40 & 1430 & 7758 & \textbf{TL} & 314.32964 & 308.97821 & 1.732 & 564 & 47.49 & 316.39468 & 2.400 & 0.24 & 308.83571 & 0.046 & 2 & 2 \\ 
  19 & 400 & 80 & 1644 & 9618 & \textbf{TL} & 379.62734 & 375.64126 & 1.061 & 182 & 106.09 & 380.00260 & 1.161 & 0.72 & 375.55034 & 0.024 & 3 & 2 \\ 
  20 & 400 & 133 & 1462 & 8014 & \textbf{TL} & 397.89812 & 397.15671 & 0.187 & 150 & 147.13 & 398.04788 & 0.224 & 2.69 & 397.08061 & 0.019 & 5 & 2 \\ 
  21 & 500 & 5 & 2918 & 27626 & 13.56 & 252.42390 & \textbf{252.42390} & 0.000 & 38 & 4.25 & 258.07630 & 2.239 & 0.03 & \textbf{252.42390} & 0.000 & 0 & 1 \\ 
  22 & 500 & 10 & 2496 & 21494 & \textbf{TL} & 289.36879 & 284.23641 & 1.806 & 606 & 10.15 & 296.97951 & 4.483 & 0.13 & 284.23641 & 0.000 & 0 & 1 \\ 
  23 & 500 & 50 & 2772 & 24348 & \textbf{TL} & 453.67706 & 443.81430 & 2.222 & 53 & 154.77 & 453.92340 & 2.278 & 0.54 & 443.60046 & 0.048 & 1 & 2 \\ 
  24 & 500 & 100 & 2810 & 23810 & \textbf{TL} & 494.49860 & 490.89659 & 0.734 & 33 & 208.13 & 494.53532 & 0.741 & 1.65 & 490.86898 & 0.006 & 3 & 3 \\ 
  25 & 500 & 167 & 3078 & 29866 & \textbf{TL} & 499.72439 & 499.69513 & 0.006 & 100 & 327.18 & 499.73327 & 0.008 & 7.88 & 499.64449 & 0.010 & 4 & 3 \\ 
  26 & 600 & 5 & 5086 & 70238 & \textbf{TL} & 389.77965 & 370.60402 & 5.174 & 108 & 22.70 & 393.75697 & 6.247 & 0.03 & 370.60402 & 0.000 & 0 & 1 \\ 
  27 & 600 & 10 & 5420 & 75078 & \textbf{TL} & 466.45326 & 446.98031 & 4.357 & 42 & 70.18 & 467.18071 & 4.519 & 0.10 & 445.93657 & 0.234 & 0 & 1 \\ 
  28 & 600 & 60 & 5338 & 78458 & \textbf{TL} & 583.44767 & 574.68799 & 1.524 & 3 & 459.65 & 583.61116 & 1.553 & 1.67 & 574.18047 & 0.088 & 3 & 2 \\ 
  29 & 600 & 120 & 4982 & 67654 & \textbf{TL} & 597.90520 & 597.17802 & 0.122 & 8 & 416.18 & 597.92809 & 0.126 & 4.69 & 596.95331 & 0.038 & 2 & 2 \\ 
  30 & 600 & 200 & 4284 & 48514 & 28.02 & 600.00000 & \textbf{600.00000} & 0.000 & 0 & 28.02 & 600.00000 & 0.000 & 3.74 & \textbf{600.00000} & 0.000 & 7 & 7 \\ 
  31 & 700 & 5 & 8624 & 152102 & \textbf{TL} & 543.71703 & 510.57203 & 6.492 & 37 & 68.55 & 545.00485 & 6.744 & 0.06 & 508.89662 & 0.329 & 0 & 1 \\ 
  32 & 700 & 10 & 7294 & 123992 & \textbf{TL} & 577.08107 & 553.66841 & 4.229 & 24 & 107.81 & 577.50451 & 4.305 & 0.24 & 552.92943 & 0.134 & 1 & 2 \\ 
  33 & 700 & 70 & 7110 & 121930 & \textbf{TL} & 692.47546 & 687.95411 & 0.657 & 2 & 541.67 & 692.55299 & 0.668 & 3.29 & 687.75055 & 0.030 & 1 & 2 \\ 
  34 & 700 & 140 & 8044 & 141988 & \textbf{TL} & 700.00000 & 699.79615 & 0.029 & 0 & 603.37 & 700.00000 & 0.029 & 10.28 & 699.77804 & 0.003 & 2 & 2 \\ 
  35 & 800 & 5 & 14838 & 312066 & \textbf{TL} & 697.75670 & 668.81970 & 4.327 & 7 & 133.40 & 698.54679 & 4.445 & 0.08 & 668.81970 & 0.000 & 0 & 1 \\ 
  36 & 800 & 10 & 10904 & 224412 & \textbf{TL} & 703.35094 & 677.49941 & 3.816 & 11 & 204.02 & 704.00226 & 3.912 & 0.20 & 677.25554 & 0.036 & 0 & 1 \\ 
  37 & 800 & 80 & 9072 & 186296 & \textbf{TL} & 794.07320 & 790.62270 & 0.436 & 0 & 606.86 & 794.07320 & 0.436 & 8.17 & 790.62265 & 0.000 & 0 & 1 \\ 
  38 & 900 & 5 & 21526 & 466312 & \textbf{TL} & 820.78375 & 788.21215 & 4.132 & 4 & 223.07 & 821.39778 & 4.210 & 0.20 & 788.21215 & 0.000 & 0 & 1 \\ 
  39 & 900 & 10 & 21248 & 471774 & \textbf{TL} & 856.26718 & 841.15019 & 1.797 & 3 & 283.25 & 856.53510 & 1.829 & 0.39 & 841.15012 & 0.000 & 0 & 1 \\ 
  40 & 900 & 90 & 14636 & 345298 & \textbf{TL} & 898.40506 & 897.00137 & 0.156 & 0 & 610.10 & 898.40506 & 0.156 & 5.69 & 896.88462 & 0.013 & 2 & 2 \\ 
   \bottomrule
\end{tabular}
\endgroup
\end{table}
\begin{table}[ht]
\centering
\caption{Detailed results for instance set \sete.\label{ta:ta22}} 
\begingroup\scriptsize
\begin{tabular}{lllll|rrrrrrrrrrrrr}
  \toprule
  id & $|V|$ & K & $\#C1$ & $\#CP$ & $t [s]$ & $UB$ & $z^*$ & $g [\%]$ & $\#BBn$ & $t_r [s]$ & $UB_r$ & $g_r [\%]$ & $t_H [s]$ & $z^H$ & $g_H [\%]$ & $\#CL$ & $mCL$ \\ \midrule
1 & 100 & 5 & 138 & 60 & 0.01 &  17.53333 & \textbf{ 17.53333} & 0.000 & 0 & 0.01 & 17.53333 & 0.000 & 0.00 & \textbf{ 17.53333} & 0.000 & 0 & 1 \\ 
  2 & 100 & 10 & 152 & 78 & 0.03 &  31.69748 & \textbf{ 31.69748} & 0.000 & 0 & 0.03 & 31.69748 & 0.000 & 0.00 &  31.57393 & 0.391 & 1 & 2 \\ 
  3 & 100 & 10 & 136 & 110 & 0.03 &  32.00000 & \textbf{ 32.00000} & 0.000 & 0 & 0.03 & 32.00000 & 0.000 & 0.00 & \textbf{ 32.00000} & 0.000 & 0 & 1 \\ 
  4 & 100 & 20 & 126 & 66 & 0.04 &  43.52222 & \textbf{ 43.52222} & 0.000 & 0 & 0.04 & 43.52222 & 0.000 & 0.00 & \textbf{ 43.52222} & 0.000 & 0 & 1 \\ 
  5 & 100 & 33 & 174 & 116 & 0.11 &  70.34444 & \textbf{ 70.34444} & 0.000 & 0 & 0.11 & 70.34444 & 0.000 & 0.01 & \textbf{ 70.34444} & 0.000 & 1 & 2 \\ 
  6 & 200 & 5 & 408 & 628 & 0.1 &  41.11333 & \textbf{ 41.11333} & 0.000 & 0 & 0.10 & 41.11333 & 0.000 & 0.00 & \textbf{ 41.11333} & 0.000 & 0 & 1 \\ 
  7 & 200 & 10 & 414 & 906 & 0.26 &  67.61570 & \textbf{ 67.61570} & 0.000 & 0 & 0.26 & 67.61570 & 0.000 & 0.01 &  67.44711 & 0.250 & 0 & 1 \\ 
  8 & 200 & 20 & 360 & 658 & 0.2 &  93.11467 & \textbf{ 93.11467} & 0.000 & 0 & 0.20 & 93.11467 & 0.000 & 0.01 & \textbf{ 93.11467} & 0.000 & 0 & 1 \\ 
  9 & 200 & 40 & 418 & 686 & 1.1 & 140.16165 & \textbf{140.16165} & 0.000 & 18 & 0.82 & 140.30235 & 0.100 & 0.03 & 139.76427 & 0.284 & 1 & 2 \\ 
  10 & 200 & 67 & 526 & 1372 & 7.34 & 182.75316 & \textbf{182.75316} & 0.000 & 110 & 3.40 & 183.20081 & 0.245 & 0.07 & 182.57485 & 0.098 & 5 & 2 \\ 
  11 & 300 & 5 & 1004 & 3968 & 0.36 &  95.44963 & \textbf{ 95.44963} & 0.000 & 3 & 0.35 & 95.53896 & 0.094 & 0.01 & \textbf{ 95.44963} & 0.000 & 0 & 1 \\ 
  12 & 300 & 10 & 814 & 2756 & 0.84 & 120.57956 & \textbf{120.57956} & 0.000 & 6 & 0.78 & 120.60948 & 0.025 & 0.02 & 120.45310 & 0.105 & 0 & 1 \\ 
  13 & 300 & 30 & 902 & 3342 & 6.45 & 198.40521 & \textbf{198.40521} & 0.000 & 32 & 4.41 & 199.15613 & 0.378 & 0.05 & 198.08131 & 0.164 & 0 & 1 \\ 
  14 & 300 & 60 & 804 & 2722 & 20.1 & 245.20222 & \textbf{245.20222} & 0.000 & 168 & 9.37 & 245.90362 & 0.286 & 0.26 & 244.24898 & 0.390 & 0 & 1 \\ 
  15 & 300 & 100 & 910 & 3470 & \textbf{TL} & 283.28413 & 283.16395 & 0.042 & 2353 & 17.30 & 284.29512 & 0.399 & 0.34 & 282.57714 & 0.208 & 4 & 2 \\ 
  16 & 400 & 5 & 1974 & 13784 & 4 & 177.56289 & \textbf{177.56289} & 0.000 & 19 & 2.09 & 180.59247 & 1.706 & 0.02 & \textbf{177.56289} & 0.000 & 0 & 1 \\ 
  17 & 400 & 10 & 1738 & 11266 & 26.74 & 216.97120 & \textbf{216.97120} & 0.000 & 192 & 6.02 & 220.71472 & 1.725 & 0.04 & \textbf{216.97120} & 0.000 & 0 & 1 \\ 
  18 & 400 & 40 & 1430 & 7758 & \textbf{TL} & 304.79825 & 304.49173 & 0.101 & 1755 & 32.50 & 307.60140 & 1.021 & 0.25 & 303.77542 & 0.236 & 3 & 2 \\ 
  19 & 400 & 80 & 1644 & 9618 & \textbf{TL} & 372.06546 & 371.85967 & 0.055 & 1366 & 45.23 & 372.97701 & 0.300 & 1.09 & 371.23688 & 0.168 & 3 & 2 \\ 
  20 & 400 & 133 & 1462 & 8014 & 246.9 & 395.68115 & \textbf{395.68115} & 0.000 & 341 & 91.60 & 395.88009 & 0.050 & 2.56 & 395.52344 & 0.040 & 4 & 2 \\ 
  21 & 500 & 5 & 2918 & 27626 & 5.4 & 246.63994 & \textbf{246.63994} & 0.000 & 14 & 3.17 & 248.88136 & 0.909 & 0.02 & \textbf{246.63994} & 0.000 & 0 & 1 \\ 
  22 & 500 & 10 & 2496 & 21494 & 186.94 & 277.37275 & \textbf{277.37275} & 0.000 & 500 & 9.22 & 285.04695 & 2.767 & 0.18 & \textbf{277.37275} & 0.000 & 0 & 1 \\ 
  23 & 500 & 50 & 2772 & 24348 & \textbf{TL} & 441.46235 & 436.65058 & 1.102 & 189 & 81.81 & 441.95910 & 1.216 & 0.60 & 436.48554 & 0.038 & 1 & 2 \\ 
  24 & 500 & 100 & 2810 & 23810 & \textbf{TL} & 488.30956 & 486.76514 & 0.317 & 114 & 127.53 & 488.41310 & 0.339 & 1.74 & 486.27600 & 0.101 & 4 & 2 \\ 
  25 & 500 & 167 & 3078 & 29866 & \textbf{TL} & 499.37030 & 499.36950 & 0.000 & 369 & 334.94 & 499.37952 & 0.002 & 4.58 & 499.24489 & 0.025 & 2 & 2 \\ 
  26 & 600 & 5 & 5086 & 70238 & \textbf{TL} & 361.65548 & 356.70251 & 1.389 & 181 & 23.46 & 371.64846 & 4.190 & 0.03 & 356.70251 & 0.000 & 0 & 1 \\ 
  27 & 600 & 10 & 5420 & 75078 & \textbf{TL} & 443.23151 & 431.38015 & 2.747 & 90 & 43.41 & 444.48826 & 3.039 & 0.10 & 431.38015 & 0.000 & 0 & 1 \\ 
  28 & 600 & 60 & 5338 & 78458 & \textbf{TL} & 569.99048 & 565.97899 & 0.709 & 27 & 281.00 & 570.11752 & 0.731 & 2.08 & 565.86250 & 0.021 & 1 & 2 \\ 
  29 & 600 & 120 & 4982 & 67654 & \textbf{TL} & 595.09777 & 594.54609 & 0.093 & 26 & 343.76 & 595.15516 & 0.102 & 3.74 & 594.08655 & 0.077 & 0 & 1 \\ 
  30 & 600 & 200 & 4284 & 48514 & 24.62 & 600.00000 & \textbf{600.00000} & 0.000 & 0 & 24.62 & 600.00000 & 0.000 & 1.96 & \textbf{600.00000} & 0.000 & 3 & 3 \\ 
  31 & 700 & 5 & 8624 & 152102 & \textbf{TL} & 512.46549 & 490.90752 & 4.391 & 60 & 34.01 & 513.49092 & 4.600 & 0.04 & 489.27721 & 0.333 & 0 & 1 \\ 
  32 & 700 & 10 & 7294 & 123992 & \textbf{TL} & 547.60250 & 533.11776 & 2.717 & 46 & 61.53 & 547.86623 & 2.766 & 0.27 & 533.11776 & 0.000 & 1 & 2 \\ 
  33 & 700 & 70 & 7110 & 121930 & \textbf{TL} & 682.62810 & 679.97642 & 0.390 & 9 & 417.71 & 682.75339 & 0.408 & 3.44 & 679.74519 & 0.034 & 1 & 2 \\ 
  34 & 700 & 140 & 8044 & 141988 & \textbf{TL} & 699.59917 & 699.59490 & 0.001 & 0 & 624.71 & 699.59917 & 0.001 & 7.37 & 699.45761 & 0.020 & 1 & 2 \\ 
  35 & 800 & 5 & 14838 & 312066 & \textbf{TL} & 660.25472 & 640.36231 & 3.106 & 11 & 85.58 & 660.87949 & 3.204 & 0.08 & 640.36231 & 0.000 & 0 & 1 \\ 
  36 & 800 & 10 & 10904 & 224412 & \textbf{TL} & 668.68489 & 649.23970 & 2.995 & 20 & 139.68 & 669.17449 & 3.070 & 0.25 & 649.22013 & 0.003 & 0 & 1 \\ 
  37 & 800 & 80 & 9072 & 186296 & \textbf{TL} & 785.75330 & 783.32037 & 0.311 & 0 & 614.14 & 785.75330 & 0.311 & 4.85 & 783.10236 & 0.028 & 3 & 2 \\ 
  38 & 900 & 5 & 21526 & 466312 & \textbf{TL} & 778.29020 & 755.52010 & 3.014 & 7 & 178.76 & 779.30087 & 3.148 & 0.10 & 755.52010 & 0.000 & 0 & 1 \\ 
  39 & 900 & 10 & 21248 & 471774 & \textbf{TL} & 825.87441 & 813.77446 & 1.487 & 5 & 277.99 & 826.24229 & 1.532 & 0.42 & 813.77440 & 0.000 & 0 & 1 \\ 
  40 & 900 & 90 & 14636 & 345298 & \textbf{TL} & 894.19899 & 893.13174 & 0.119 & 0 & 609.33 & 894.19899 & 0.119 & 5.87 & 892.75694 & 0.042 & 0 & 1 \\ 
   \bottomrule
\end{tabular}
\endgroup
\end{table}
\begin{table}[ht]
\centering
\caption{Detailed results for instance set \setf.\label{ta:ta23}} 
\begingroup\scriptsize
\begin{tabular}{lllll|rrrrrrrrrrrrr}
  \toprule
  id & $|V|$ & K & $\#C1$ & $\#CP$ & $t [s]$ & $UB$ & $z^*$ & $g [\%]$ & $\#BBn$ & $t_r [s]$ & $UB_r$ & $g_r [\%]$ & $t_H [s]$ & $z^H$ & $g_H [\%]$ & $\#CL$ & $mCL$ \\ \midrule
1 & 100 & 5 & 138 & 60 & 0.01 &  17.53333 & \textbf{ 17.53333} & 0.000 & 0 & 0.01 & 17.53333 & 0.000 & 0.00 & \textbf{ 17.53333} & 0.000 & 0 & 1 \\ 
  2 & 100 & 10 & 152 & 78 & 0.02 &  31.59899 & \textbf{ 31.59899} & 0.000 & 0 & 0.02 & 31.59899 & 0.000 & 0.00 &  31.50957 & 0.284 & 1 & 2 \\ 
  3 & 100 & 10 & 136 & 110 & 0.03 &  32.00000 & \textbf{ 32.00000} & 0.000 & 0 & 0.03 & 32.00000 & 0.000 & 0.00 & \textbf{ 32.00000} & 0.000 & 0 & 1 \\ 
  4 & 100 & 20 & 126 & 66 & 0.04 &  43.48889 & \textbf{ 43.48889} & 0.000 & 0 & 0.04 & 43.48889 & 0.000 & 0.00 & \textbf{ 43.48889} & 0.000 & 0 & 1 \\ 
  5 & 100 & 33 & 174 & 116 & 0.1 &  70.25778 & \textbf{ 70.25778} & 0.000 & 0 & 0.10 & 70.25778 & 0.000 & 0.00 & \textbf{ 70.25778} & 0.000 & 1 & 2 \\ 
  6 & 200 & 5 & 408 & 628 & 0.08 &  41.00533 & \textbf{ 41.00533} & 0.000 & 0 & 0.08 & 41.00533 & 0.000 & 0.00 & \textbf{ 41.00533} & 0.000 & 0 & 1 \\ 
  7 & 200 & 10 & 414 & 906 & 0.25 &  67.24628 & \textbf{ 67.24628} & 0.000 & 0 & 0.25 & 67.24628 & 0.000 & 0.01 &  67.13884 & 0.160 & 0 & 1 \\ 
  8 & 200 & 20 & 360 & 658 & 0.14 &  92.84587 & \textbf{ 92.84587} & 0.000 & 0 & 0.14 & 92.84587 & 0.000 & 0.00 & \textbf{ 92.84587} & 0.000 & 0 & 1 \\ 
  9 & 200 & 40 & 418 & 686 & 0.7 & 139.58466 & \textbf{139.58466} & 0.000 & 0 & 0.70 & 139.58466 & 0.000 & 0.04 & 139.26263 & 0.231 & 1 & 2 \\ 
  10 & 200 & 67 & 526 & 1372 & 1.77 & 182.01744 & \textbf{182.01744} & 0.000 & 1 & 1.77 & 182.01744 & 0.000 & 0.05 & 181.66281 & 0.195 & 3 & 2 \\ 
  11 & 300 & 5 & 1004 & 3968 & 0.26 &  94.69985 & \textbf{ 94.69985} & 0.000 & 0 & 0.26 & 94.69985 & 0.000 & 0.01 & \textbf{ 94.69985} & 0.000 & 0 & 1 \\ 
  12 & 300 & 10 & 814 & 2756 & 0.55 & 119.39182 & \textbf{119.39182} & 0.000 & 0 & 0.55 & 119.39182 & 0.000 & 0.02 & \textbf{119.39182} & 0.000 & 0 & 1 \\ 
  13 & 300 & 30 & 902 & 3342 & 3.01 & 196.56208 & \textbf{196.56208} & 0.000 & 3 & 2.99 & 196.56635 & 0.002 & 0.08 & 196.46042 & 0.052 & 0 & 1 \\ 
  14 & 300 & 60 & 804 & 2722 & 5.18 & 243.46959 & \textbf{243.46959} & 0.000 & 19 & 4.47 & 243.58466 & 0.047 & 0.20 & 242.57910 & 0.367 & 1 & 2 \\ 
  15 & 300 & 100 & 910 & 3470 & 9.65 & 281.15156 & \textbf{281.15156} & 0.000 & 6 & 9.10 & 281.16753 & 0.006 & 0.22 & 280.41637 & 0.262 & 4 & 2 \\ 
  16 & 400 & 5 & 1974 & 13784 & 1.88 & 173.70516 & \textbf{173.70516} & 0.000 & 5 & 1.79 & 173.93763 & 0.134 & 0.02 & \textbf{173.70516} & 0.000 & 0 & 1 \\ 
  17 & 400 & 10 & 1738 & 11266 & 5.93 & 213.34848 & \textbf{213.34848} & 0.000 & 21 & 4.02 & 214.36070 & 0.474 & 0.05 & \textbf{213.34848} & 0.000 & 0 & 1 \\ 
  18 & 400 & 40 & 1430 & 7758 & 18.48 & 300.60222 & \textbf{300.60222} & 0.000 & 25 & 15.00 & 300.85103 & 0.083 & 0.21 & 299.69177 & 0.304 & 1 & 2 \\ 
  19 & 400 & 80 & 1644 & 9618 & 32.75 & 368.98454 & \textbf{368.98454} & 0.000 & 40 & 26.47 & 369.05720 & 0.020 & 1.05 & 368.73928 & 0.067 & 3 & 2 \\ 
  20 & 400 & 133 & 1462 & 8014 & 54.25 & 394.49127 & \textbf{394.49127} & 0.000 & 25 & 48.42 & 394.51863 & 0.007 & 2.39 & 394.18127 & 0.079 & 3 & 2 \\ 
  21 & 500 & 5 & 2918 & 27626 & 2.14 & 240.85597 & \textbf{240.85597} & 0.000 & 1 & 2.14 & 240.85597 & 0.000 & 0.01 & \textbf{240.85597} & 0.000 & 0 & 1 \\ 
  22 & 500 & 10 & 2496 & 21494 & 23.61 & 270.50910 & \textbf{270.50910} & 0.000 & 52 & 8.54 & 273.78255 & 1.210 & 0.23 & 269.90640 & 0.223 & 0 & 1 \\ 
  23 & 500 & 50 & 2772 & 24348 & \textbf{TL} & 431.11067 & 430.59969 & 0.119 & 680 & 55.02 & 432.50301 & 0.442 & 0.86 & 429.34958 & 0.291 & 2 & 2 \\ 
  24 & 500 & 100 & 2810 & 23810 & 102.06 & 483.42587 & \textbf{483.42587} & 0.000 & 94 & 62.16 & 483.60746 & 0.038 & 1.20 & 483.31644 & 0.023 & 2 & 2 \\ 
  25 & 500 & 167 & 3078 & 29866 & \textbf{TL} & 499.06847 & 499.06780 & 0.000 & 480 & 293.78 & 499.07072 & 0.001 & 3.96 & 498.64504 & 0.085 & 5 & 2 \\ 
  26 & 600 & 5 & 5086 & 70238 & 111.75 & 343.15861 & \textbf{343.15861} & 0.000 & 94 & 12.16 & 350.42676 & 2.118 & 0.04 & 342.80100 & 0.104 & 0 & 1 \\ 
  27 & 600 & 10 & 5420 & 75078 & \textbf{TL} & 418.49400 & 417.26694 & 0.294 & 296 & 30.06 & 422.97740 & 1.369 & 0.09 & 416.29616 & 0.233 & 0 & 1 \\ 
  28 & 600 & 60 & 5338 & 78458 & \textbf{TL} & 560.06552 & 558.94592 & 0.200 & 139 & 161.65 & 560.47718 & 0.274 & 1.42 & 558.38867 & 0.100 & 3 & 2 \\ 
  29 & 600 & 120 & 4982 & 67654 & \textbf{TL} & 592.56902 & 592.41242 & 0.026 & 91 & 273.00 & 592.71083 & 0.050 & 3.75 & 591.87448 & 0.091 & 2 & 2 \\ 
  30 & 600 & 200 & 4284 & 48514 & 21.5 & 600.00000 & \textbf{600.00000} & 0.000 & 0 & 21.50 & 600.00000 & 0.000 & 2.53 & \textbf{600.00000} & 0.000 & 3 & 2 \\ 
  31 & 700 & 5 & 8624 & 152102 & \textbf{TL} & 475.09728 & 471.24301 & 0.818 & 111 & 33.75 & 482.82600 & 2.458 & 0.04 & 470.24484 & 0.212 & 0 & 1 \\ 
  32 & 700 & 10 & 7294 & 123992 & \textbf{TL} & 516.14680 & 512.62608 & 0.687 & 135 & 42.78 & 519.29985 & 1.302 & 0.32 & 512.62607 & 0.000 & 1 & 2 \\ 
  33 & 700 & 70 & 7110 & 121930 & \textbf{TL} & 674.32202 & 673.19323 & 0.168 & 20 & 372.46 & 674.42293 & 0.183 & 2.23 & 672.06564 & 0.168 & 3 & 2 \\ 
  34 & 700 & 140 & 8044 & 141988 & \textbf{TL} & 699.35981 & 699.35796 & 0.000 & 0 & 616.80 & 699.35981 & 0.000 & 6.50 & 699.19755 & 0.023 & 3 & 2 \\ 
  35 & 800 & 5 & 14838 & 312066 & \textbf{TL} & 623.45089 & 611.90493 & 1.887 & 30 & 69.20 & 624.34885 & 2.034 & 0.07 & 611.72016 & 0.030 & 0 & 1 \\ 
  36 & 800 & 10 & 10904 & 224412 & \textbf{TL} & 635.07708 & 621.43791 & 2.195 & 32 & 80.34 & 635.54000 & 2.269 & 0.19 & 620.29056 & 0.185 & 0 & 1 \\ 
  37 & 800 & 80 & 9072 & 186296 & \textbf{TL} & 778.47803 & 777.14445 & 0.172 & 0 & 609.69 & 778.47803 & 0.172 & 5.96 & 776.88145 & 0.034 & 2 & 2 \\ 
  38 & 900 & 5 & 21526 & 466312 & \textbf{TL} & 736.92875 & 723.50305 & 1.856 & 13 & 111.86 & 737.79255 & 1.975 & 0.13 & 723.50304 & 0.000 & 0 & 1 \\ 
  39 & 900 & 10 & 21248 & 471774 & \textbf{TL} & 796.72085 & 787.65539 & 1.151 & 12 & 159.43 & 797.01301 & 1.188 & 0.20 & 787.65527 & 0.000 & 0 & 1 \\ 
  40 & 900 & 90 & 14636 & 345298 & \textbf{TL} & 890.67134 & 889.55816 & 0.125 & 0 & 606.54 & 890.67134 & 0.125 & 6.92 & 889.11200 & 0.050 & 2 & 2 \\ 
   \bottomrule
\end{tabular}
\endgroup
\end{table}
\end{landscape}

\section{Conclusions and future work \label{sec:con}}

In this paper we studied the recently introduced
\emph{multiple gradual cover location problem} (\MGCLP, see~\cite[][]{berman2018multiple}).
The \MGCLP\ addresses, simultaneously, two issues that have been identified 
as relevant in practical facility location applications:
gradual coverage and potential co-location of facilities.
We presented four different mixed integer programming formulations for the \MGCLP,
all of them exploiting the submodularity of the objective function.
Furthermore, we designed and implemented a branch-and-cut framework based one these formulations.
Our framework is further enhanced by additional cut separation strategies,
starting and primal heuristics and initialization procedures.

From an algorithmic perspective, 
the computational results show that our approach
allows to effectively address different sets of instances. We provide optimal solution values for 13 instances from literature, where the optimal solution was not known, and additionally provide improved solution values for seven instances.
%
We also analyzed the dependence of the solution-structure on instance-characteristics.
The reported results show that the \MGCLP\ possesses a great capability for allowing decision makers to design their facility deployment strategies according
to different coverage capacities or customer preferences.
Interestingly, the results show that although multiple location
of facilities indeed occurs (there are even cases where seven facilities are 
located at the same location), the magnitude of the co-location is rather small.

A wide class of strategical and tactical decisions
in many operations research settings correspond to facility location deployment. 
Incorporating modeling features such as partial and joint coverage
along with co-location of facilities, establishes a new path
for closing the gap between academic optimization tools
and real-world location problems.
For future work, it would be interesting to study
how different partial coverage functions can be included
into the proposed models, or how the different degrees
of uncertainty (e.g., location of customers) can be incorporated
within the proposed modeling and algorithmic frameworks.

\section*{Acknowledgements}

E.A.-M. acknowledges  the  support  of  the  Chilean  Council  of  Scientific  and  Technological  Research,  CONICYT,  through  the  grant  FONDECYT  N.1180670  and  through  the  Complex  Engineering Systems Institute 
(ICM-FIC:P-05-004-F, CONICYT:FB0816). The research of M.S. was supported by the Austrian Research Fund (FWF, Project P 26755-N19).

\bibliographystyle{plainnat}
\bibliography{RRUFLP}

\end{document}